\documentclass[11pt,a4paper]{article}

\usepackage{amsfonts}
\usepackage{amsmath}
\usepackage{amssymb}
\usepackage{amstext}
\usepackage{amsthm}
\usepackage[toc,page]{appendix}
\usepackage{authblk}
\usepackage{cancel}
\usepackage{cite}
\usepackage{datetime}
\usepackage{enumerate}
\usepackage[cm]{fullpage}
\usepackage{graphicx}
\usepackage[latin1]{inputenc}
\usepackage{multirow}
\usepackage{pgfplots}
\usepackage{subcaption}
\usepackage{tikz}
\usepackage{times}
\usepackage[normalem]{ulem}
\usepackage{setspace}
\usepackage{units}
\usepackage{url}
\usepackage{xcolor}
\usepackage{hyperref}

\newcommand{\ajcomment}[1]{}
\renewcommand{\ajcomment}[1]{\textcolor{red}{\bf $\star$}\marginpar{\color{red} {\it \scriptsize \raggedright #1}}}

\singlespacing

\captionsetup[subfigure]{justification=justified,singlelinecheck=false}

\usepackage[font=small]{caption}

\usetikzlibrary{shapes,arrows,plotmarks,matrix,positioning,fit,calc,3d}

\newcommand{\iid}{\stackrel{\mathrm{iid}}{\sim}}

\newtheorem{theorem}{Theorem}[section]

\newtheorem{proposition}[theorem]{Proposition}

\newtheorem{remark}[theorem]{Remark}
\newtheorem{mydef}{Definition}[section]
\newtheorem{assumptions}{Assumptions}

\DeclareMathOperator*{\argmin}{argmin}

\newlength\figureheight
\newlength\figurewidth

\begin{document}

\title{Convergence of the $k$-Means Minimization\\Problem using $\Gamma$-Convergence}
\author[1]{Matthew Thorpe}
\author[1]{Florian Theil}
\author[1]{Adam M. Johansen}
\author[2]{Neil Cade}
\affil[1]{University of Warwick, Coventry, CV4 7AL, United Kingdom }
\affil[2]{Selex-ES, Luton, LU1 3PG, United Kingdom}
\date{}

\maketitle
\abstract{
The $k$-means method is an iterative clustering algorithm which associates each observation with one of $k$ clusters.
It traditionally employs cluster centers in the same space as the observed data.
By relaxing this requirement, it is possible to apply the $k$-means method to infinite dimensional problems, for example multiple target tracking and smoothing problems in the presence of unknown data association.
Via a $\Gamma$-convergence argument, the associated optimization problem is shown to converge in the sense that both the $k$-means minimum and minimizers converge in the large data limit to quantities which depend upon the observed data only through its distribution.
The theory is supplemented with two examples to demonstrate the range of problems now accessible by the $k$-means method.
The first example combines a non-parametric smoothing problem with unknown data association.
The second addresses tracking using sparse data from a network of passive sensors.
}

\section{Introduction \label{sec:intro}}

The $k$-means algorithm \cite{lloyd82} is a technique for assigning each of a collection of observed data to exactly one of $k$ clusters, each of which has a unique center, in such a way that each observation is assigned to the cluster whose center is closest to that observation in an appropriate sense.

The $k$-means method has traditionally been used with limited scope.
Its usual application has been in Euclidean spaces which restricts its application to finite dimensional problems.
There are relatively few theoretical results using the $k$-means methodology in infinite dimensions of which \cite{biau08,canas12,cuesta07, laloe10, lember03, linder02, tarpey03} are the only papers known to the authors.
In the right framework, post-hoc track estimation in multiple target scenarios with unknown data association can be viewed as a clustering problem and therefore accessible to the $k$-means method.
In such problems one typically has finite-dimensional data, but would wish to estimate infinite dimensional tracks with the added complication of unresolved data association.
It is our aim to propose and characterize a framework for the $k$-means method which can deal with this problem.

A natural question to ask of any clustering technique is whether the estimated clustering stabilizes as more data becomes available.
More precisely, we ask whether certain estimates converge, in an appropriate sense, in the large data limit.
In order to answer this question in our particular context we first establish a related optimization problem and make precise the notion of convergence.

Consistency of estimators for ill-posed inverse problems has been well studied, for example \cite{dashti13, osullivan86}, but without the data association problem.
In contrast to standard statistical consistency results, we do not assume that there exists a structural relationship between the optimization problem and the data-generating process in order to establish convergence to true parameter values in the large data limit; rather, we demonstrate convergence to the solution of a related limiting problem.

This paper shows the convergence of the minimization problem associated with the $k$-means method in a framework that is general enough to include examples where the cluster centers are not necessarily in the same space as the data points.
In particular we are motivated by the application to infinite dimensional problems, e.g. the smoothing-data association problem.
The smoothing-data association problem is the problem of associating data points $\{(t_i,z_i)\}_{i=1}^n \subset [0,1]\times\mathbb{R}^\kappa$ to unknown trajectories $\mu_j:[0,1]\to\mathbb{R}^\kappa$ for $j=1,2,\dots,k$.
By treating the trajectories $\mu_j$ as the cluster centers one may approach this problem using the $k$-means methodology.
The comparison of data points to cluster centers is a pointwise distance: $d((t_i,z_i),\mu_j)=|\mu_j(t_i)-z_i|^2$ (where $|\cdot|$ is the Euclidean norm on $\mathbb{R}^\kappa$).
To ensure the problem is well-posed some regularization is also necessary.
For $k=1$ the problem reduces to smoothing and coincides with the limiting problem studied in \cite{hall05}.
We will discuss the smoothing-data association problem more in Section~\ref{sec:MTT:app}.

Let us now introduce the notation for our variational approach.
The $k$-means method is a strategy for partitioning a data set $\Psi_n = \{\xi_i\}_{i=1}^n \subset X$ into $k$ clusters where each cluster has center $\mu_j$ for $j=1,2,\dots, k$.
First let us consider the special case when $\mu_j\in X$.
The data partition is defined by associating each data point with the cluster center closest to it which is measured by a cost function $d:X\times X \to [0,\infty)$.
Traditionally the $k$-means method considers Euclidean spaces $X=\mathbb{R}^\kappa$, where typically we choose $d(x,y)=|x-y|^2=\sum_{i=1}^\kappa(x_i-y_i)^2$.
We define the energy for a choice of cluster centers given data by
\begin{align*}
f_n: X^k & \to \mathbb{R} &
f_n(\mu|\Psi_n) & = \frac{1}{n} \sum_{i=1}^n \bigwedge_{j=1}^k d(\xi_i,\mu_j),
\end{align*}
where for any $k$ variables, $a_1, a_2,\dots, a_k$,
\( \bigwedge_{j=1}^k a_j := \min\{a_1,\ldots,a_k\}. \)
The optimal choice of $\mu$ is that which minimizes $f_n(\cdot|\Psi_n)$.
We define
\begin{equation*}
\hat{\theta}_n = \min_{\mu\in X^k} f_n(\mu|\Psi_n) \in \mathbb{R}.
\end{equation*}

An associated ``limiting problem'' can be defined
\begin{equation*}
\theta = \min_{\mu\in X^k} f_\infty(\mu)
\end{equation*}
where we assume, in a sense which will be made precise later, that $\xi_i\iid P$ for some suitable probability distribution, $P$, and define
\[ f_\infty(\mu) = \int \bigwedge_{j=1}^k d(x,\mu_j) P(\text{d} x). \]
In Section \ref{sec:cons} we validate the formulation by first showing that, under regularity conditions and with probability one, the minimum energy converges: $\hat{\theta}_n \to \theta$.
And secondly by showing that (up to a subsequence) the minimizers converge: $\mu^n \to \mu^\infty$ where $\mu^n$ minimizes $f_n$ and  $\mu^\infty$ minimizes $f_\infty$ (again with probability one).

In a more sophisticated version of the $k$-means method the requirement that $\mu_j \in X$ can be relaxed.
We instead allow $\mu=(\mu_1,\mu_2,\dots,\mu_k)\in Y^k$ for some other Banach
space, $Y$, and define $d$ appropriately.
This leads to interesting statistical questions.
When $Y$ is infinite dimensional even establishing whether or not a minimizer exists is non-trivial.

When the cluster center is in a different space to the data, bounding the set of minimizers becomes less natural.
For example, consider the smoothing problem in which one wishes to fit a continuous function to a set of data points.
The natural choice of cost function is a pointwise distance of the data to the curve.
The optimal solution is for the cluster center to interpolate the data points: in the limit the cluster center may no longer be well defined.
In particular we cannot hope to have converging sequences of minimizers.

In the smoothing literature this problem is prevented by using a regularization term $r:Y^k\to \mathbb{R}$.
For a cost function $d:X\times Y\to [0,\infty)$ the energies $f_n(\cdot|\Psi_n),f_\infty(\cdot):Y^k\to \mathbb{R}$ are redefined
\begin{align*}
f_n(\mu|\Psi_n) & = \frac{1}{n} \sum_{i=1}^n \bigwedge_{j=1}^k d(\xi_i,\mu_j) + \lambda_n r(\mu) \\
f_\infty(\mu) & = \int \bigwedge_{j=1}^k d(x,\mu_j) P(\text{d} x) + \lambda r(\mu).
\end{align*}
Adding regularization changes the nature of the problem so we commit time in Section \ref{sec:MTT} to justifying our approach.
Particularly we motivate treating $\lambda_n = \lambda$ as a constant independent of $n$.
We are able to repeat the analysis from Section 4; that is to establish that the minimum and a subsequence of minimizers still converge.

Early results assumed $Y=X$ were Euclidean spaces and showed the convergence of minimizers to the appropriate limit \cite{hartigan78,pollard81}.
The motivation for the early work in this area was to show consistency of the methodology.
In particular this requires there to be an underlying `truth'.
This requires the assumption that there exists a unique minimizer to the limiting energy.
These results do not hold when the limiting energy has more than one minimizer \cite{ben-david07}.
In this paper we discuss only the convergence of the method and as such require no assumption as to the existence or uniqueness of a minimizer to the limiting problem.
Consistency has been strengthened to a central limit theorem in \cite{pollard82} also assuming a unique minimizer to the limiting energy.
Other rates of convergence have been shown in \cite{antos05, bartlett98, chou94, linder94}.
In Hilbert spaces there exist convergence results and rates of convergence for the minimum.
In~\cite{biau08} the authors show that $|f_n(\mu^n)-f_\infty(\mu^\infty)|$ is of order $\frac{1}{\sqrt{n}}$, however, there are no results for the convergence of minimizers.
Results exist for $k\to \infty$, see for example~\cite{canas12} (which are also valid for $Y\neq X$).

Assuming that $Y=X$, the convergence of the minimization problem in a reflexive and separable Banach space has been proved in \cite{linder02} and a similar result in metric spaces in \cite{lember03}.
In \cite{laloe10}, the existence of a weakly converging subsequence was inferred using the results of \cite{linder02}.

In the following section we introduce the notation and preliminary material used in this paper.

We then, in Section~\ref{sec:cons}, consider convergence in the special case when the cluster centers are in the same space as the data points, i.e. $Y=X$.
In this case we don't have an issue with well-posedness as the data has the same dimension as the cluster centers.
For this reason we use energies defined without regularization.
Theorem~\ref{thm:cons} shows that the minimum converges, i.e. $\hat{\theta}_n\to \theta$ as $n\to \infty$, for almost every sequence of observations and furthermore we have a subsequence $\mu^{n_m}$ of minimizers of $f_{n_m}$ which weakly converge to some $\mu^\infty$ which minimizes $f_\infty$.

This result is generalized in Section~\ref{sec:MTT} to an arbitrary $X$ and $Y$.
The analogous result to Theorem~\ref{thm:cons} is Theorem~\ref{thm:kmeanscons}.
We first motivate the problem and in particular our choice of scaling in the regularization in Section \ref{sec:MTT:reg} before proceeding to the results in Section \ref{sec:MTT:theory}.
Verifying the conditions on the cost function $d$ and regularization term $r$ is non-trivial and so we show an application to the smoothing-data association problem in Section \ref{sec:MTT:app}.

To demonstrate the generality of the results in this paper, two applications are considered in Section~\ref{sec:examples}.
The first is the data association and smoothing problem. We show the minimum converging as the data size increases.
We also numerically investigate the use of the $k$-means energy to determine whether two targets have crossed tracks.
The second example uses measured times of arrival and amplitudes of signals
from moving sources that are received across a network of three sensors. The
cluster centers are the source trajectories in $\mathbb{R}^2$.

\section{Preliminaries \label{sec:not}}

In this section we introduce some notation and background theory which will be used in Sections~\ref{sec:cons} and \ref{sec:MTT} to establish our convergence results.
In these sections we show the existence of optimal cluster centers using the direct method.
By imposing conditions, such that our energies are weakly lower semi-continuous, we can deduce the existence of minimizers.
Further conditions ensure the minimizers are uniformly bounded.
The $\Gamma$-convergence framework (e.g. \cite{braides02,dalmaso93}) allows us to establish the convergence of the minimum and also the convergence of minimizers.

We have the following definition of $\Gamma$-convergence with respect to weak convergence.

\begin{mydef}[$\Gamma$-convergence]
\label{def:gamcon}
A sequence $f_n :A\to \mathbb{R}\cup \{\pm\infty\}$ on a Banach space $(A,\|\cdot\|_A)$ is said to \textit{$\Gamma$-converge} on the domain $A$ to $f_\infty :A\to \mathbb{R}\cup \{\pm\infty\}$ with respect to weak convergence on $A$, and we write $f_\infty = \Gamma\text{-}\lim_n f_n$, if for all $x\in A$ we have
\begin{itemize}
\item[(i)] (liminf inequality) for every sequence $(x_n)$ weakly converging to $x$
\[ f_\infty(x) \leq \liminf_n f_n(x_n); \]
\item[(ii)] (recovery sequence) there exists a sequence $(x_n)$ weakly converging to $x$ such that
\[ f_\infty(x) \geq \limsup_n f_n(x_n). \]
\end{itemize}
\end{mydef}

When it exists the $\Gamma$-limit is always weakly lower semi-continuous, and thus admits minimizers.
An important property of $\Gamma$-convergence is that it implies the convergence of minimizers.
In particular, we will make extensive use of the following well-known result.

\begin{theorem}[Convergence of Minimizers]
\label{thm:conmin}
Let $f_n: A\to \mathbb{R}$ be a sequence of functionals on a Banach space $(A,\|\cdot\|_A)$ and assume that there exists $N>0$ and a weakly compact subset $K\subset A$ with
\[ \inf_A f_n = \inf_K f_n \quad \forall n>N. \]
If $f_\infty = \Gamma\text{-}\lim_n f_n$ and $f_\infty$ is not identically $\pm\infty$ then
\[ \min_A f_\infty = \lim_n \inf_A f_n. \]
Furthermore if each $f_n$ is weakly lower semi-continuous then for each $f_n$ there exists a minimizer $x_n\in K$ and any weak limit point of $x_n$ minimizes $f_\infty$.
Since $K$ is weakly compact there exists at least one weak limit point.
\end{theorem}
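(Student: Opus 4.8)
The plan is to run the classical ``fundamental theorem of $\Gamma$-convergence'' argument, adapted to the weak topology, splitting the proof into three parts: (a) existence of a minimizer $x_n\in K$ of each $f_n$, (b) convergence of the minimal values, and (c) identification of the weak limit points of $(x_n)$ as minimizers of $f_\infty$. Throughout I write $m_n:=\inf_A f_n$, which by hypothesis equals $\inf_K f_n$ for $n>N$. A preliminary observation is that on a Banach space weak compactness of $K$ is equivalent to weak sequential compactness (Eberlein--\v{S}mulian), so every extraction below is legitimate.

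For (a), fix $n>N$ and a minimizing sequence $(y_j)\subset K$ for $f_n$; extract $y_{j_\ell}\rightharpoonup x_n\in K$ by weak sequential compactness, and conclude from weak lower semi-continuity that $f_n(x_n)\le\liminf_\ell f_n(y_{j_\ell})=\inf_K f_n=m_n$, so $x_n$ is a global minimizer lying in $K$. (If one only wants part (b) and does not assume $f_n$ weakly lower semi-continuous, it is enough to fix near-minimizers $x_n\in K$ with $f_n(x_n)\le m_n+1/n$.)

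For (b) I would establish two inequalities. Upper bound: for any $x\in A$ take a recovery sequence $x_n\rightharpoonup x$ with $\limsup_n f_n(x_n)\le f_\infty(x)$ from Definition~\ref{def:gamcon}(ii); since $m_n\le f_n(x_n)$, this gives $\limsup_n m_n\le f_\infty(x)$, and taking the infimum over $x$ yields $\limsup_n m_n\le\inf_A f_\infty$. Lower bound: choose a subsequence $(n_j)$ with $m_{n_j}\to\liminf_n m_n$, use (a) to get minimizers $x_{n_j}\in K$, extract $x_{n_{j_\ell}}\rightharpoonup x^\ast\in K$, and apply the liminf inequality Definition~\ref{def:gamcon}(i) along this subsequence: $f_\infty(x^\ast)\le\liminf_\ell f_{n_{j_\ell}}(x_{n_{j_\ell}})=\liminf_\ell m_{n_{j_\ell}}=\liminf_n m_n$. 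Hence $\inf_A f_\infty\le f_\infty(x^\ast)\le\liminf_n m_n\le\limsup_n m_n\le\inf_A f_\infty$, so all these quantities coincide: $\lim_n m_n=\inf_A f_\infty$, and the same chain shows $f_\infty(x^\ast)=\inf_A f_\infty$, i.e. the infimum is attained (so one may write $\min_A f_\infty$); the hypothesis $f_\infty\not\equiv\pm\infty$ is what makes this a genuine, finite minimum. For (c), if $x_{n_m}\rightharpoonup x$ then Definition~\ref{def:gamcon}(i) along that subsequence gives $f_\infty(x)\le\liminf_m f_{n_m}(x_{n_m})=\liminf_m m_{n_m}=\lim_n m_n=\min_A f_\infty$, so $x$ minimizes $f_\infty$; at least one such subsequence exists because $x_n\in K$ for $n>N$ and $K$ is weakly sequentially compact.

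I do not anticipate a real obstacle: the whole content is the careful interleaving of the two clauses of $\Gamma$-convergence with the compactness hypothesis. The two points needing attention are the (routine) justification that the liminf inequality may be invoked along a subsequence — handled by interleaving the subsequence with a recovery sequence, noting $\liminf$ over a full sequence is dominated by $\liminf$ over any subsequence — and the recognition that the assumption $\inf_A f_n=\inf_K f_n$ is precisely what forces the recovery-sequence upper bound and the compactness lower bound to agree; without it one obtains only $\liminf_n m_n\ge\min_A f_\infty$, possibly strictly.
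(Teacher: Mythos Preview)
Your argument is correct and is exactly the standard ``fundamental theorem of $\Gamma$-convergence'' proof. The paper does not supply its own proof of this statement: immediately after the theorem it simply refers the reader to \cite[Theorem~1.21]{braides02}, and your three-step argument (existence of $x_n\in K$ by weak lower semi-continuity plus Eberlein--\v{S}mulian, sandwiching $\lim_n m_n$ between the recovery-sequence upper bound and the liminf lower bound along a compactly extracted subsequence, and then identifying arbitrary weak limit points as minimizers) is precisely the proof one finds there.
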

A proof of the theorem can be found in \cite[Theorem 1.21]{braides02}.

The problems which we address involve random observations.
We assume throughout the existence of a probability space
$(\Omega,\mathcal{F},\mathbb{P})$, rich enough to support a countably infinite
sequence of such observations, $\xi_1^{(\omega)},\ldots$.
All random elements are defined upon this common probability space and all stochastic quantifiers are to be understood as acting with respect to $\mathbb{P}$ unless otherwise stated.
Where appropriate, to emphasize the randomness of the functionals $f_n$, we will write $f^{(\omega)}_n$ to indicate the functional associated with the particular observation sequence $\xi_1^{(\omega)},\ldots,\xi_n^{(\omega)}$ and we allow $P_n^{(\omega)}$ to denote the associated empirical measure.

We define the support of a (probability) measure to be the smallest closed set such that the complement is null.

For clarity we often write integrals using operator notation.
I.e. for a measure $P$, which is usually a probability distribution, we write
\[ Ph = \int h(x) \; P(\text{d}x). \]
For a sequence of probability distributions, $P_n$, we say that $P_n$ converges weakly to $P$ if
\[ P_nh \to Ph \quad \quad \text{for all bounded and continuous } h \]
and we write $P_n\Rightarrow P$.
With a slight abuse of notation we will sometimes write $P(U):=P\mathbb{I}_U$ for a measurable set $U$.

For a Banach space $A$ one can define the dual space $A^*$ to be the space of all bounded and linear maps over $A$ into $\mathbb{R}$ equipped with the norm $\|F\|_{A^*} = \sup_{x\in A} |F(x)|$.
Similarly one can define the second dual $A^{**}$ as the space of all bounded and linear maps over $A^*$ into $\mathbb{R}$.
Reflexive spaces are defined to be spaces $A$ such that $A$ is isometrically isomorphic to $A^{**}$.
These have the useful property that closed and bounded sets are weakly compact.
For example any $L^p$ space (with $1<p<\infty$) is reflexive, as is any Hilbert space (by the Riesz Representation Theorem: if $A$ is a Hilbert space then $A^*$ is isometrically isomorphic to $A$).

A sequence $x_n\in A$ is said to weakly convergence to $x\in A$ if $F(x_n)\to F(x)$ for all $F\in A^*$.
We write $x_n\rightharpoonup x$.
We say a functional $G:A\to \mathbb{R}$ is weakly continuous if $G(x_n)\to G(x)$ whenever $x_n \rightharpoonup x$ and strongly continuous if $G(x_n)\to G(x)$ whenever $\|x_n-x\|_A\to 0$.
Note that weak continuity implies strong continuity.
Similarly a functional $G$ is weakly lower semi-continuous if $\liminf_{n\to \infty} G(x_n)\geq G(x)$ whenever $x_n\rightharpoonup x$.

We define the Sobolev spaces $W^{s,p}(I)$ on $I\subseteq \mathbb{R}$ by
\[ W^{s,p} = W^{s,p}(I) = \left\{ f:I\to \mathbb{R} \text{ s.t. } \partial^i f \in L^p(I) \text{ for } i=0,\dots, s \right\} \]
where we use $\partial$ for the weak derivative, i.e. $g=\partial f$ if for all $\phi\in C_c^\infty(I)$ (the space of smooth functions with compact support)
\[ \int_I f(x) \frac{\mathrm{d}\phi}{\mathrm{d}x}(x) \; \text{d} x = - \int_I g(x) \phi(x) \; \text{d} x. \]
In particular, we will use the special case when $p=2$ and we write $H^s=W^{s,2}$.
This is a Hilbert space with norm:
\[ \|f\|_{H^s}^2 = \sum_{i=0}^s \| \partial^i f\|_{L^2}^2. \]

For two real-valued and positive sequences $a_n$ and $b_n$ we write $a_n \lesssim b_n$ if $\frac{a_n}{b_n}$ is bounded.
For a space $A$ and a set $K\subset A$ we write $K^c$ for the complement of $K$ in $A$, i.e. $K^c=A\setminus K$.

\section{Convergence when \texorpdfstring{$Y=X$}{Y=X} \label{sec:cons}}

We assume we are given data points $\xi_i\in X$ for $i=1,2,\dots$ where $X$ is a reflexive and separable Banach space with norm $\|\cdot\|_X$ and Borel $\sigma$-algebra $\mathcal{X}$.
These data points realize a sequence of $\mathcal{X}$-measurable random elements on $(\Omega, \mathcal{F}, \mathbb{P})$ which will also be denoted, with a slight abuse of notation, $\xi_i$.

We define
\begin{align}
f_n^{(\omega)}:X^k & \to \mathbb{R}, \quad  f_n^{(\omega)}(\mu) = P_n^{(\omega)} g_{\mu} = \frac{1}{n} \sum_{i=1}^n \bigwedge_{j=1}^k d(\xi_i^{(\omega)},\mu_j) \label{eq:fn} \\
f_\infty:X^k & \to \mathbb{R}, \quad  f_\infty (\mu) = P g_{\mu} = \int_X \bigwedge_{j=1}^k d(x,\mu_j) P(\text{d} x) \label{eq:finfty}
\end{align}
where
\[ g_{\mu}(x) = \bigwedge_{j=1}^k d(x,\mu_j), \]
$P$ is a probability measure on $(X,\mathcal{X})$, and empirical measure
$P_n^{(\omega)}$ associated with $\xi_1^{(\omega)},\ldots,\xi_n^{(\omega)}$ is defined by
\[ P_n^{(\omega)} h = \frac{1}{n} \sum_{i=1}^n h(\xi_i^{(\omega)}) \]
for any $\mathcal{X}$-measurable function $h: X\to \mathbb{R}$.
We assume $\xi_i$ are iid according to $P$ with $P=\mathbb{P}\circ \xi^{-1}_i$.

We wish to show
\begin{equation} \label{eq:limeq}
\hat{\theta}_n^{(\omega)} \to \theta \quad \text{for almost every } \omega \text{ as } n\to \infty
\end{equation}
where
\begin{align*}
\hat{\theta}_n^{(\omega)} & = \inf_{\mu\in X^k} f_n^{(\omega)}(\mu) \\
\theta & = \inf_{\mu\in X^k} f_\infty(\mu).
\end{align*}
We define $\|\cdot\|_k :X^k\to [0,\infty)$ by
\begin{equation} \label{eq:cons:norm}
\| \mu\|_k := \max_j \|\mu_j\|_X \quad \text{for } \mu=(\mu_1, \mu_2,\dots, \mu_k) \in X^k.
\end{equation}
The reflexivity of $(X,\|\cdot\|_X)$ carries through to $(X^k,\|\cdot\|_k)$.

Our strategy is similar to that of~\cite{pollard81} but we embed the methodology into the $\Gamma$-convergence framework.
We show that \eqref{eq:finfty} is the $\Gamma$-limit in Theorem~\ref{thm:gamcon} and that minimizers are bounded in Proposition~\ref{lem:bdd}.
We may then apply Theorem~\ref{thm:conmin} to infer \eqref{eq:limeq} and the existence of a weakly converging subsequence of minimizers.

The key assumptions on $d$ and $P$ are given in Assumptions~\ref{ass:d}.
The first assumption can be understood as a `closeness' condition for the space $X$ with respect to $d$.
If we let $d(x,y)=1$ for $x\neq y$ and $d(x,x)=0$ then our cost function $d$ does not carry any information on how far apart two points are.
Assume there exists a probability density for $P$ which has unbounded support.
Then $f_n^{(\omega)}(\mu)\geq \frac{n-k}{n}$ (for almost every $\omega$), with equality when we choose $\mu_j\in \{\xi_i^{(\omega)}\}_{i=1}^n$.
I.e. any set of $k$ unique data points will minimize $f_n^{(\omega)}$.
Since our data points are unbounded we may find a sequence $\|\xi_{i_n}^{(\omega)}\|_X\to \infty$.
Now we choose $\mu_1^n=\xi_{i_n}^{(\omega)}$ and clearly our cluster center is unbounded.
We see that this choice of $d$ violates the first assumption.
We also add a moment condition to the upper bound to ensure integrability.
Note that this also implies that $Pd(\cdot,0)\leq \int_X M(\|x\|) \; P(\text{d}x)<\infty$ so
$f_\infty(0)<\infty$ and, in particular, that $f_\infty$ is not identically infinity.

The second assumption is slightly stronger condition on $d$ than a weak lower semi-continuity condition in the first variable and strong continuity in the second variable.
The condition allows the application of Fatou's lemma for weakly converging probabilities, see~\cite{feinberg14}.

The third assumption allows us to view $d(\xi_i,y)$ as a collection of random variables.
The fourth implies that we have at least $k$ open balls with positive probability and therefore we are not overfitting clusters to data.

\begin{assumptions} \label{ass:d}
We have the following assumptions on $d:X\times X\to [0,\infty)$ and $P$.
\begin{enumerate}
\item[1.1.] \label{ass:d:norm} There exist continuous, strictly increasing functions $m,M:[0,\infty)\to [0,\infty)$ such that
\[ m(\|x-y\|_X) \leq d(x,y) \leq M(\|x-y\|_X) \quad \text{for all } x,y\in X \]
with $\lim_{r\to \infty}m(r)= \infty$, $M(0)=0$, there exists $\gamma<\infty$ such that $M(\|x+y\|_X)\leq \gamma M(\|x\|_X) + \gamma M(\|y\|_X)$ and finally $\int_X M(\|x\|_X) \; P(\text{d}x) < \infty$ (and $M$ is measurable).
\item[1.2.] \label{ass:d:cts} For each $x,y\in X$ we have that if $x_m\to x$ and $y_n\rightharpoonup y$ as $n,m\to \infty$ then
\[ \liminf_{n,m\to \infty} d(x_m,y_n) \geq d(x,y) \quad \text{and} \quad \lim_{m\to \infty} d(x_m,y) = d(x,y). \]
\item[1.3.] \label{ass:d:meas} For each $y\in X$ we have that $d(\cdot,y)$ is $\mathcal{X}$-measurable.
\item[1.4.] \label{ass:d:fit} There exist $k$ different centers $\mu^\dagger_j\in X$, $j=1,2,\dots,k$ such that for all $\delta>0$
\[ P(B(\mu_j^\dagger,\delta)) > 0 \quad \quad \quad \forall \; j=1,2,\dots,k \]
where $B(\mu,\delta):=\{x\in X: \|\mu-x\|_X< \delta \}$.
\end{enumerate}
\end{assumptions}

We now show that for a particular common choice of cost function, $d$, Assumptions~\ref{ass:d}.1 to~\ref{ass:d}.3 hold.

\begin{remark} \label{rem:dcond}
For any $p>0$ let $d(x,y)=\|x-y\|_X^p$ then $d$ satisfies Assumptions~\ref{ass:d}.1 to~\ref{ass:d}.3.
\end{remark}

\begin{proof}
Taking $m(r)=M(r)=r^p$ we can bound $m(\|x-y\|_X)\leq d(x,y) \leq M(\|x-y\|_X)$ and $m,M$ clearly satisfy $m(r)\to \infty$, $M(0)=0$, are strictly increasing and continuous.
One can also show that
\[ M(\|x+y\|_X) \leq 2^{p-1} \left( \|x\|^p_X + \|y\|^p_X \right) \]
hence Assumption~\ref{ass:d}.1 is satisfied.

Let $x_m\to x$ and $y_n\rightharpoonup y$.
Then
\begin{align*}
\liminf_{n,m\to\infty} d(x_m,y_n)^{\frac{1}{p}} & = \liminf_{n,m\to \infty} \|x_m-y_m\|_X \\
 & \geq \liminf_{n,m\to \infty} \left( \|y_n-x\|_X -\|x_m-x\|_X \right) \\
 & = \liminf_{n\to \infty} \|y_n-x\|_X \quad \text{since } x_m\to x \\
 & \geq \|y-x\|_X
\end{align*}
where the last inequality follows as a consequence of the Hahn-Banach Theorem and the fact that $y_n-x\rightharpoonup y-x$ which implies $\liminf_{n\to \infty} \|y_n-x\|_X\geq \|y-x\|_X$.
Clearly $d(x_m,y)\to d(x,y)$ and so Assumption~\ref{ass:d}.2 holds.

The third assumption holds by the Borel measurability of metrics on complete separable metric spaces.
\end{proof}

We now state the first result of the paper which formalizes the understanding that $f_\infty$ is the limit of $f_n^{(\omega)}$.

\begin{theorem}
\label{thm:gamcon}
Let $(X,\|\cdot\|_X)$ be a reflexive and separable Banach space with Borel $\sigma$-algebra, $\mathcal{X}$; let $\{\xi_i\}_{i\in\mathbb{N}}$ be a sequence of independent $X$-valued random elements with common law $P$.
Assume $d:X\times X\to [0,\infty)$ and that $P$ satisfies the conditions in Assumptions~\ref{ass:d}.
Define $f^{(\omega)}_n:X^k \to\mathbb{R}$ and $f_\infty:X^k\to\mathbb{R}$ by \eqref{eq:fn} and \eqref{eq:finfty} respectively.
Then
\[ f_\infty = \Gamma\text{-}\lim_n f^{(\omega)}_n \]
for $\mathbb{P}$-almost every $\omega$.
\end{theorem}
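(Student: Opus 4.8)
The plan is to verify the two conditions in Definition~\ref{def:gamcon} directly, using the strong law of large numbers to pass from $f_n^{(\omega)}$ to $f_\infty$ on a set of full $\mathbb{P}$-measure. The key point is that $\Gamma$-convergence is a statement about \emph{each} fixed $x=\mu\in X^k$, so we need pointwise (in $\mu$) control, but we must be careful to choose the full-measure set $\Omega'\subseteq\Omega$ \emph{before} fixing $\mu$, since there are uncountably many $\mu$ to consider. I would handle this by first establishing the recovery-sequence inequality using the \emph{constant} sequence $\mu_n=\mu$, which reduces the problem to showing $f_n^{(\omega)}(\mu)\to f_\infty(\mu)$ for all $\mu$ simultaneously, and then establishing the liminf inequality for genuinely varying sequences $\mu^n\rightharpoonup\mu$.

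For the \textbf{recovery sequence}, take $\mu_n=\mu$ for all $n$. Then $f_n^{(\omega)}(\mu)=P_n^{(\omega)}g_\mu$ and $f_\infty(\mu)=Pg_\mu$ with $g_\mu(x)=\bigwedge_{j=1}^k d(x,\mu_j)$. By Assumption~\ref{ass:d}.1, $0\le g_\mu(x)\le \bigwedge_j M(\|x-\mu_j\|_X)\le M(\|x-\mu_1\|_X)\le \gamma M(\|x\|_X)+\gamma M(\|\mu_1\|_X)$, which is $P$-integrable; hence $g_\mu\in L^1(P)$. By the strong law of large numbers, for each fixed $\mu$ we have $P_n^{(\omega)}g_\mu\to Pg_\mu$ almost surely. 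To get a single null set working for all $\mu$, I would exploit separability: let $D\subset X$ be a countable dense set, obtain a full-measure event $\Omega'$ on which $P_n^{(\omega)}g_\mu\to Pg_\mu$ for every $\mu\in D^k$ and also $P_n^{(\omega)}M(\|\cdot\|_X)\to PM(\|\cdot\|_X)$, and then extend to arbitrary $\mu$ by an equicontinuity/domination argument: using Assumption~\ref{ass:d}.2 (continuity of $d$ in the second variable) together with the $M$-domination one shows $\mu\mapsto P_n^{(\omega)}g_\mu$ and $\mu\mapsto Pg_\mu$ are continuous with a modulus uniform in $n$ on bounded sets (a Moore--Osgood type interchange of limits). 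This gives $f_n^{(\omega)}(\mu)\to f_\infty(\mu)$ for all $\mu\in X^k$, $\omega\in\Omega'$, which yields condition (ii) (indeed with equality in the limit).

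For the \textbf{liminf inequality}, fix $\omega\in\Omega'$ and a sequence $\mu^n\rightharpoonup\mu$ in $X^k$, i.e.\ $\mu^n_j\rightharpoonup\mu_j$ for each $j$. I want $\liminf_n f_n^{(\omega)}(\mu^n)\ge f_\infty(\mu)$. The natural route is a Fatou-type lemma for weakly converging empirical measures: $P_n^{(\omega)}\Rightarrow P$ on $\Omega'$ (by the same SLLN applied to a convergence-determining countable family of bounded continuous functions, plus tightness from the moment bound), and the integrand $g_{\mu^n}(x)=\bigwedge_j d(x,\mu^n_j)$ converges in the appropriate one-sided sense: for $x_m\to x$, Assumption~\ref{ass:d}.2 gives $\liminf_{m,n} d(x_m,\mu^n_j)\ge d(x,\mu_j)$, hence $\liminf g_{\mu^n}(x_m)\ge g_\mu(x)$. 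Invoking the generalized Fatou lemma of \cite{feinberg14} for setwise/weakly converging measures—which is exactly what Assumption~\ref{ass:d}.2 is tailored to, per the discussion preceding Assumptions~\ref{ass:d}—yields $\liminf_n P_n^{(\omega)} g_{\mu^n}\ge P g_\mu$, i.e.\ the claim. One technical wrinkle is that \cite{feinberg14} typically requires a uniform lower bound / lower-semicontinuity hypothesis on the integrands; here $g_{\mu^n}\ge 0$ gives the lower bound for free, so this should go through cleanly.

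The \textbf{main obstacle} is the recovery-sequence step—specifically, upgrading the countable-$\mu$ SLLN statement to \emph{all} $\mu\in X^k$ on one fixed full-measure event. The subtlety is that weak convergence of $\mu^{(m)}\to\mu$ in $D^k$ need not be available (density is in norm, not weak topology), so one genuinely needs norm-density plus the strong continuity half of Assumption~\ref{ass:d}.2 and the uniform $M$-domination to interchange $\lim_m$ and $\lim_n$. Everything else—integrability, the SLLN, tightness, and the Fatou application—is essentially bookkeeping once the assumptions are in hand.
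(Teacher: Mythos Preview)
Your liminf argument is essentially the paper's: invoke the Fatou lemma of \cite{feinberg14} for $P_n^{(\omega)}\Rightarrow P$ together with Assumption~\ref{ass:d}.2 to get $\liminf_n P_n^{(\omega)} g_{\mu^n}\ge Pg_\mu$. That part is fine.

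The recovery-sequence argument, however, has a real gap. Your plan is to apply the SLLN for $g_\mu$ at a countable dense set $\mu\in D^k$ and then pass to arbitrary $\mu$ by a continuity/equicontinuity argument in the \emph{second} variable of $d$. But Assumption~\ref{ass:d}.2 does not give you that: the clause $\lim_{m\to\infty} d(x_m,y)=d(x,y)$ is continuity in the \emph{first} variable, while in the second variable you only have the one-sided $\liminf_{n,m} d(x_m,y_n)\ge d(x,y)$, i.e.\ weak lower semicontinuity. The $M$-upper bound in Assumption~\ref{ass:d}.1 controls $d(x,\mu^{(m)})$ itself but not $|d(x,\mu^{(m)})-d(x,\mu)|$, so it does not supply the missing upper semicontinuity. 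Consequently neither $\mu\mapsto Pg_\mu$ nor $\mu\mapsto P_n^{(\omega)}g_\mu$ is known to be continuous, and the Moore--Osgood interchange you propose is unjustified under the stated hypotheses.

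The paper avoids this by indexing the countable SLLN events not by centers $\mu$ but by truncation radii $q\in\mathbb{N}$: on the full-measure set where $P_n^{(\omega)}\Rightarrow P$, $P_n^{(\omega)}(B(0,q)^c)\to P(B(0,q)^c)$, and $P_n^{(\omega)}\bigl(\mathbb{I}_{B(0,q)^c}M(\|\cdot\|_X)\bigr)\to P\bigl(\mathbb{I}_{B(0,q)^c}M(\|\cdot\|_X)\bigr)$ for all $q\in\mathbb{N}$, one multiplies $g_\mu$ by a smooth cutoff $\zeta_q$ supported in $B(0,q)$. Then $\zeta_q g_\mu$ is bounded and continuous \emph{in $x$} (this is exactly what the strong-continuity half of Assumption~\ref{ass:d}.2 gives), so $P_n^{(\omega)}(\zeta_q g_\mu)\to P(\zeta_q g_\mu)$ for every $\mu$ simultaneously; the tails are handled uniformly via the $M$-domination and the countable tail-SLLN events. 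No regularity of $d$ in the center variable is needed.
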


\begin{proof}
Define $\Omega^\prime$ as the intersection of three events:
\begin{align*}
\Omega^\prime & = \left\{ \omega\in \Omega : P_n^{(\omega)}\Rightarrow P \right\} \cap  \left\{ \omega\in \Omega : P_n^{(\omega)}(B(0,q)^c) \to P(B(0,q)^c) \; \forall q\in \mathbb{N} \right\} \\
 & \quad \quad \quad \quad \quad \cap  \left\{ \omega\in \Omega : \int_X \mathbb{I}_{B(0,q)^c}(x)M(\|x\|_X) \; P_n^{(\omega)} (\text{d} x) \to \int_X \mathbb{I}_{B(0,q)^c}(x) M(\|x\|_X) \; P(\text{d} x) \; \forall q\in \mathbb{N} \right\}.
\end{align*}
By the almost sure weak convergence of the empirical measure the
first of these events has probability one, the second and third are
characterized by the convergence of a countable collection of empirical averages to their
population average and, by the strong law of large numbers, each has probability
one. Hence $\mathbb{P}(\Omega^\prime)=1$.

Fix $\omega \in \Omega^\prime$: we will show that the lim inf inequality holds and
a recovery sequence exists for this $\omega$ and hence for every $\omega\in\Omega^\prime$.
We start by showing the lim inf inequality, allowing $\{\mu^n\}_{n=1}^\infty\in X^k$ to denote any sequence which converges weakly to $\mu\in X^k$.
We are required to show:
\[ \liminf_{n\to \infty} f_n^{(\omega)}(\mu^n) \geq  f_\infty (\mu). \]
By Theorem~1.1 in~\cite{feinberg14} we have
\[ \int_X \liminf_{n\to\infty, x^\prime\to x} g_{\mu^n}(x^\prime) \; P(\text{d} x) \leq \liminf_{n\to \infty} \int_X g_{\mu^n}(x) \; P_n^{(\omega)}(\text{d}x) = \liminf_{n\to\infty} P_n^{(\omega)} g_{\mu^n}. \]
For each $x\in X$, we have by Assumption~\ref{ass:d}.2 that
\[ \liminf_{x^\prime\to x,n\to \infty} d(x^\prime,\mu_j^n) \geq d(x,\mu_j). \]
By taking the minimum over $j$ we have
\[ \liminf_{x^\prime\to x,n\to \infty} g_{\mu^n}(x^\prime) = \bigwedge_{j=1}^k \liminf_{x^\prime\to x,n\to \infty} d(x^\prime,\mu_j^n) \geq \bigwedge_{j=1}^k d(x,\mu_j) = g_{\mu}(x). \]
Hence
\[ \liminf_{n\to \infty} f_n^{(\omega)}(\mu^n) = \liminf_{n\to \infty} P_n^{(\omega)} g_{\mu^n} \geq \int_X g_{\mu}(x) \; P(\text{d} x) = f_\infty(\mu) \]
as required.

We now establish the existence of a recovery sequence for every $\omega \in \Omega^\prime$ and every $\mu \in X^k$.
Let $\mu^n=\mu\in X^k$.
Let $\zeta_q$ be a $C^\infty(X)$ sequence of functions such that $0\leq \zeta_q(x) \leq 1$ for all $x\in X$, $\zeta_q(x) = 1$ for $x\in B(0,q-1)$ and $\zeta_q(x) = 0$ for $x\not\in B(0,q)$.
Then the function $\zeta_q(x)g_\mu(x)$ is continuous in $x$ (and with respect to convergence in $\|\cdot\|_X$) for all $q$.
We also have
\begin{align*}
\zeta_q(x) g_\mu(x) & \leq \zeta_q(x) d(x,\mu_1) \\
 & \leq \zeta_q(x) M(\|x-\mu_1\|_X) \\
 & \leq \zeta_q(x) M(\|x\|_X + \|\mu_1\|_X) \\
 & \leq M(q + \|\mu_1\|_X)
\end{align*}
so $\zeta_q g_\mu$ is a continuous and bounded function, hence by the weak convergence of $P_n^{(\omega)}$ to $P$ we have
\[ P_n^{(\omega)} \zeta_q g_\mu \to P \zeta_q g_\mu \]
as $n\to \infty$ for all $q\in\mathbb{N}$.
For all $q\in\mathbb{N}$ we have
\begin{align*}
\limsup_{n\to \infty} |P_n^{(\omega)} g_\mu - Pg_\mu | & \leq \limsup_{n\to \infty} |P_n^{(\omega)} g_\mu - P_n^{(\omega)}\zeta_q g_\mu | + \limsup_{n\to \infty} |P_n^{(\omega)} \zeta_q g_\mu - P \zeta_q g_\mu | + \limsup_{n\to \infty} |P \zeta_q g_\mu - Pg_\mu | \\
 & = \limsup_{n\to \infty} |P_n^{(\omega)} g_\mu - P_n^{(\omega)}\zeta_q g_\mu | + |P \zeta_q g_\mu - Pg_\mu |.
\end{align*}
Therefore,
\[ \limsup_{n\to \infty} |P_n^{(\omega)} g_\mu - Pg_\mu | \leq \liminf_{q\to \infty} \limsup_{n\to \infty} |P_n^{(\omega)} g_\mu - P_n^{(\omega)}\zeta_q g_\mu | \]
by the dominated convergence theorem.
We now show that the right hand side of the above expression is equal to zero.
We have
\begin{align*}
|P_n^{(\omega)} g_\mu - P_n^{(\omega)}\zeta_q g_\mu | & \leq P_n^{(\omega)} \mathbb{I}_{(B(0,q-1))^c} g_\mu \\
 & \leq P_n^{(\omega)} \mathbb{I}_{(B(0,q-1))^c} d(\cdot,\mu_1) \\
 & \leq P_n^{(\omega)} \mathbb{I}_{(B(0,q-1))^c} M(\|\cdot-\mu_1\|_X) \\
 & \leq \gamma \left(P_n^{(\omega)} \mathbb{I}_{(B(0,q-1))^c}M(\|\cdot\|_X) + M(\|\mu_1\|_X) P_n^{(\omega)} \mathbb{I}_{(B(0,q-1))^c} \right) \\
 & \to \gamma \left( P \mathbb{I}_{(B(0,q-1))^c}M(\|\cdot\|_X) + M(\|\mu_1\|_X) P \mathbb{I}_{(B(0,q-1))^c} \right) \quad \text{as } n\to \infty \\
 & \to 0 \quad \text{as } q \to \infty
\end{align*}
where the last limit follows by the monotone convergence theorem.
We have shown
\[ \lim_{n\to \infty} |P_n^{(\omega)} g_\mu - Pg_\mu | = 0. \]
Hence
\[ f_n^{(\omega)}(\mu) \to f_\infty(\mu) \]
as required.
\end{proof}

Now we have established almost sure $\Gamma$-convergence we establish the boundedness condition in Proposition~\ref{lem:bdd} so we can apply Theorem~\ref{thm:conmin}.

\begin{proposition}
\label{lem:bdd}
Assuming the conditions of Theorem~\ref{thm:gamcon} and define $\|\cdot\|_k$ by~\eqref{eq:cons:norm}, there exists $R>0$ such that
\[ \inf_{\mu\in X^k} f_n^{(\omega)}(\mu) = \inf_{\|\mu\|_k\leq R} f_n^{(\omega)}(\mu) \quad \forall n \text{ sufficiently large} \]
for $\mathbb{P}$-almost every $\omega$.
In particular $R$ is independent of $n$.
\end{proposition}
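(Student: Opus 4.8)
The plan is to show that any minimizing sequence cannot stray too far from the centers $\mu_j^\dagger$ provided by Assumption~\ref{ass:d}.4, by exhibiting a fixed competitor configuration whose energy is strictly beaten only by configurations with uniformly bounded norm. First I would fix the competitor $\mu^\dagger=(\mu_1^\dagger,\dots,\mu_k^\dagger)$ and note that, for $\mathbb{P}$-almost every $\omega$, $f_n^{(\omega)}(\mu^\dagger)\to f_\infty(\mu^\dagger)$; since $f_\infty(\mu^\dagger)\leq P M(\|\cdot-\mu_1^\dagger\|_X)<\infty$ by Assumption~\ref{ass:d}.1, there is a deterministic constant $C_0$ with $f_n^{(\omega)}(\mu^\dagger)\leq C_0$ for all $n$ large enough (on a probability-one event). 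Hence $\inf_{\mu} f_n^{(\omega)}(\mu)\leq C_0$, so it suffices to find $R$ such that $\|\mu\|_k>R$ forces $f_n^{(\omega)}(\mu)>C_0$ eventually.

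The key step is a lower bound showing that a cluster center placed far away contributes nothing useful. Suppose $\|\mu_j\|_X$ is large for some $j$. The idea is that if \emph{every} center had large norm, then for every data point $\xi_i$ in the fixed ball $B(0,q)$ we would have $d(\xi_i,\mu_j)\geq m(\|\xi_i-\mu_j\|_X)\geq m(\|\mu_j\|_X - q)$, which is large; since $P_n^{(\omega)}(B(0,q))$ is close to $P(B(0,q))$, which is positive for $q$ large (as $P$ is a probability measure), this would force $f_n^{(\omega)}(\mu)$ above $C_0$. More carefully, I would argue contrapositively on the whole configuration: let $\rho=\min_j\|\mu_j\|_X$. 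For $n$ large, $P_n^{(\omega)}(B(0,q))\geq \frac12 P(B(0,q))=:c_q>0$, and for every $\xi_i\in B(0,q)$, $\bigwedge_j d(\xi_i,\mu_j)\geq m(\rho-q)$ once $\rho>q$. Therefore $f_n^{(\omega)}(\mu)\geq c_q\, m(\rho-q)$. Choosing $q$ fixed with $c_q>0$ and then $R$ large enough that $c_q\, m(R-q)>C_0$ (possible since $m(r)\to\infty$), we conclude that $\min_j\|\mu_j\|_X>R$ implies $f_n^{(\omega)}(\mu)>C_0\geq\inf_\mu f_n^{(\omega)}(\mu)$, so such $\mu$ is not a minimizer.

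This controls the smallest center norm, but not $\|\mu\|_k=\max_j\|\mu_j\|_X$, so the main obstacle is that a minimizer could still have one center dragged off to infinity while another stays put. To handle this I would use Assumption~\ref{ass:d}.4 more fully: there are $k$ \emph{distinct} centers $\mu_1^\dagger,\dots,\mu_k^\dagger$ with $P(B(\mu_j^\dagger,\delta))>0$ for all $\delta$. Pick $\delta$ small enough that the balls $B(\mu_j^\dagger,\delta)$ are disjoint, set $p_j:=P(B(\mu_j^\dagger,\delta))>0$ and $p:=\min_j p_j$. If some $\|\mu_\ell\|_X>R$ with $R$ huge, I would compare against the competitor obtained from $\mu$ by moving $\mu_\ell$ to, say, $\mu_1^\dagger$; a center at distance $>R$ from the ball $B(\mu_{j_0}^\dagger,\delta)$ (for the index $j_0$ not already ``claimed,'' which exists by a pigeonhole/counting argument since there are $k$ such balls and at most $k-1$ bounded centers) is never the nearest center to any point in that ball once $R$ is large, because a bounded center achieves cost at most $\max_j M(\|\mu_j^\dagger\|_X + \|\mu_j^\dagger\|_X+2\delta)$ there (using the sub-additivity of $M$), while the far center costs at least $m(R - \|\mu_{j_0}^\dagger\|_X-\delta)$. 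Hence the far center is redundant: removing it (i.e. replacing it by a copy of another center) does not increase $f_n^{(\omega)}$ on the event where $P_n^{(\omega)}$ approximates $P$ on the relevant balls. Iterating, any minimizer is (up to relabeling redundant centers, which does not change the energy) realized with all centers bounded by $R$, which gives the claim; the bound on the \emph{attained} infimum then follows since we only ever need configurations in $\{\|\mu\|_k\leq R\}$ to realize values arbitrarily close to $\inf_\mu f_n^{(\omega)}$. I expect the delicate bookkeeping to be in making the ``redundant center'' replacement rigorous simultaneously for all large $n$ on a single probability-one event, which is why restricting to the countable family of balls $B(\mu_j^\dagger,\delta)$ with rational $\delta$ (so the strong law applies to each) is the right device.
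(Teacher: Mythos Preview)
Your first step---bounding $\min_j\|\mu_j\|_X$ via the competitor $\mu^\dagger$ and the mass of a fixed ball---is correct, and you rightly note it does not control $\|\mu\|_k=\max_j\|\mu_j\|_X$. The gap is in the second step. The ``redundant center'' replacement goes the wrong way: if you replace the far center $\mu_\ell$ by a copy of a bounded center, then $g_{\mu'}(\xi_i)=\min_{j\neq\ell}d(\xi_i,\mu_j)\geq g_\mu(\xi_i)$, so $f_n^{(\omega)}(\mu')\geq f_n^{(\omega)}(\mu)$; if instead you move $\mu_\ell$ to $\mu_1^\dagger$, you have only argued that $\mu_\ell$ is not the nearest center to points of the single ball $B(\mu_{j_0}^\dagger,\delta)$, not to all of $\mathrm{supp}(P)$. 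Since $P$ is not assumed to have bounded support (only $\int M(\|x\|_X)\,P(\mathrm{d}x)<\infty$), there may well be data points $\xi_i$ for which $\mu_\ell$ \emph{is} the nearest center, and for those $g_{\mu'}(\xi_i)>g_\mu(\xi_i)$. Thus your competitor $\mu'$ can have strictly larger energy, and the desired inequality $\inf_{\|\mu\|_k\le R}f_n^{(\omega)}\le\inf_{X^k}f_n^{(\omega)}$ does not follow from your construction.

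The paper closes this gap by comparing the $k$-center and $(k-1)$-center \emph{infima} rather than building a direct competitor. It argues by contradiction: take near-minimizers $\mu^n$ with $\|\mu^n_{l_n}\|_X\to\infty$. Since $d(x,\mu^n_{l_n})\to\infty$ pointwise, one obtains $\liminf_n\big(f_n^{(\omega)}(\mu^n)-f_n^{(\omega)}((\mu^n_j)_{j\neq l_n})\big)\ge 0$; this is the correct formalization of ``the far center is asymptotically redundant,'' but note the sign---dropping a center can only \emph{raise} the energy, and the content is that it raises it by asymptotically nothing. The contradiction comes from the uniform strict gap $\liminf_n\big(\inf_{X^k}f_n^{(\omega)}-\inf_{X^{k-1}}f_n^{(\omega)}\big)<0$: given any $\nu\in X^{k-1}$, your pigeonhole observation shows some ball $B(\mu_l^\dagger,\delta_1/2)$ is missed by all $\nu_j$, and on $B(\mu_l^\dagger,\delta_2)$ one has $\min_j d(x,\nu_j)-d(x,\mu_l^\dagger)\ge m(\tfrac{\delta_1}{2}-\delta_2)-M(\delta_2)=:\epsilon>0$, so adjoining $\mu_l^\dagger$ as a $k$-th center drops $f_n^{(\omega)}$ by at least $\epsilon\cdot\min_l P_n^{(\omega)}(B(\mu_l^\dagger,\delta_2))$, which is bounded away from zero by the strong law (your device of restricting to rational $\delta$ is exactly what makes this hold on a single probability-one event). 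Combining the two displays yields the contradiction. Your ingredients were right; what was missing is routing them through the $(k-1)$-versus-$k$ comparison instead of a one-shot replacement.
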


\begin{proof}
The structure of the proof is similar to \cite[Lemma 2.1]{lember03}.
We argue by contradiction.
In particular we argue that if a cluster center is unbounded then in the limit the minimum is achieved over the remaining $k-1$ cluster centers.
We then use Assumption~\ref{ass:d}.4 to imply that adding an extra cluster center will strictly decrease the minimum, and hence we have a contradiction.

We define $\Omega^{\prime\prime}$ to be
\[ \Omega^{\prime\prime} = \cap_{\delta\in\mathbb{Q}\cap(0,\infty), l=1,2,\dots,k} \left\{\omega\in\Omega^\prime: P_n^{(\omega)} (B(\mu^\dagger_l,\delta))\to P(B(\mu^\dagger_l,\delta)) \right\}. \]
As $\Omega^{\prime\prime}$ is the countable intersection of sets of probability one, we have $\mathbb{P}(\Omega^{\prime\prime})=1$.
Fix $\omega\in\Omega^{\prime\prime}$ and assume that the cluster centers $\mu^n\in X^k$ are almost minimizers, i.e.
\[ f_n^{(\omega)}(\mu^n) \leq \inf_{\mu\in X^k} f_n^{(\omega)}(\mu)+
\varepsilon_n \]
for some sequence $\varepsilon_n>0$ such that
\begin{equation} \label{nullseq}
\lim_{n\to \infty} \varepsilon_n=0.
\end{equation}

Assume that $\lim\limits_{n \to \infty}\|\mu^n\|_{k}= \infty$. There exists $l_n \in \{1,\ldots,k\}$ such that $\lim\limits_{n \to \infty}\|\mu^n_{l_n}\|_X= \infty$.
Fix $x\in X$ then
\[ d(x,\mu^n_{l_n}) \geq m(\|\mu^n_{l_n}-x\|_X) \to \infty. \]
Therefore, for each $x\in X$,
\[ \lim_{n \to \infty} \left(\bigwedge_{j=1}^k d(x,\mu^n_{j}) - \bigwedge_{j\neq l_n} d(x,\mu^n_j)\right)= 0. \]
Let $\delta>0$ then there exists $N$ such that for $n\geq N$
\[ \bigwedge_{j=1}^k d(x,\mu^n_{j}) - \bigwedge_{j\neq l_n} d(x,\mu^n_j) \geq -\delta. \]
Hence
\[ \liminf_{n\to \infty} \int \left( \bigwedge_{j=1}^k d(x,\mu^n_j) - \bigwedge_{j\neq l_n} d(x,\mu^n_{j}) \right) \; P_n^{(\omega)}(\text{d} x) \geq -\delta. \]
Letting $\delta\to 0$ we have
\[ \liminf_{n\to \infty} \int \left( \bigwedge_{j=1}^k d(x,\mu^n_j) - \bigwedge_{j\neq l_n} d(x,\mu^n_{j}) \right) \; P_n^{(\omega)}(\text{d} x) \geq 0 \]
and moreover
\begin{equation} \label{eq:contra}\liminf_{n\to \infty} \left( f_n^{(\omega)}\left(\mu^{n}\right) - f_n^{(\omega)}\left((\mu^n_j)_{j\neq l_n} \right)\right)\geq 0,
\end{equation}
where we interpret $f_n^{(\omega)}$ accordingly.
It suffices to demonstrate that
\begin{equation} \label{eq:difffnl}
\liminf_{n\to \infty}\left(\inf_{\mu \in X^k} f_n^{(\omega)}(\mu) - \inf_{\mu\in X^{k-1}} f_n^{(\omega)}(\mu)\right) <0.
\end{equation}
Indeed, if \eqref{eq:difffnl} holds, then
\begin{align*}
& \liminf_{n\to \infty} \left( f_{n}^{(\omega)}\left(\mu^{n}\right) - f_{n}^{(\omega)}\left((\mu^n_j)_{j\neq l_n} \right)\right) \\
= & \lim_{n\to \infty}\bigl(
\underbrace{ f_{n}^{(\omega)}\left(\mu^{n}\right) - \inf_{\mu \in X^{k}} f_n^{(\omega)}(\mu)}_{\leq \varepsilon_n}\bigr) + \liminf_{n\to \infty}\left(\inf_{\mu \in X^{k}} f_n^{(\omega)}(\mu)- f_{n}^{(\omega)}\left((\mu^n_j)_{j\neq l_n} \right)\right)\\
< & 0 \quad \text{by \eqref{nullseq} and \eqref{eq:difffnl}},
\end{align*}
but this contradicts \eqref{eq:contra}.

We now establish \eqref{eq:difffnl}.
By Assumption~\ref{ass:d}.4 there exists $k$ centers $\mu_j^\dagger\in X$ and $\delta_1>0$ such that $\min_{j\neq l} \|\mu^\dagger_j - \mu_l^\dagger\|_X \geq \delta_1$.
Hence for any $\mu\in X^{k-1}$ there exists $l\in\{1,2,\dots,k\}$ such that we have
\[ \| \mu^\dagger_l - \mu_j \|_X \geq \frac{\delta_1}{2} \quad \quad \text{for } j=1,2,\dots,k-1. \]
Proceeding with this choice of $l$, for $x\in B(\mu^\dagger_l,\delta_2)$ (for any $\delta_2 \in (0,\delta_1/2)$) we have
\[ \|\mu_j - x\|_X \geq \frac{\delta_1}{2} - \delta_2 \]
and therefore $d(\mu_j,x) \geq m(\frac{\delta_1}{2}-\delta_2)$ for all $j=1,2,\dots,k-1$.
Also
\begin{equation} \label{minmax}
D_l(\mu) := \min_{j=1,2,\dots, k-1} d(x,\mu_j) - d(x,\mu_l^\dagger) \geq m(\frac{\delta_1}{2}-\delta_2) - M(\delta_2).
\end{equation}
So for $\delta_2$ sufficiently small there exists $\epsilon>0$ such that
\[ D_l(\mu) \geq \epsilon. \]
Since the right hand side is independent of $\mu\in X^{k-1}$,
\[ \inf_{\mu\in X^{k-1}} \max_l D_l(\mu) \geq \epsilon. \]

Define the characteristic function
\[ \chi_\mu(\xi)=\begin{cases} 1 & \text{ if } \|\xi-\mu_{l(\mu)}^\dagger\|_X < \delta_2\\
0 & \text{ otherwise,}\end{cases} \]
where $l(\mu)$ is the maximizer in (\ref{minmax}).
For each $\omega\in\Omega^{\prime\prime}$ one obtains
\begin{align*}
\inf_{\mu\in X^{k-1}} f_n^{(\omega)}(\mu) & = \inf_{\mu\in X^{k-1}} \frac{1}{n} \sum_{i=1}^n \bigwedge_{j=1}^{k-1} d(\xi_i,\mu_j) \\
 & \geq \inf_{\mu\in X^{k-1}} \frac{1}{n} \sum_{i=1}^n \left[ \bigwedge_{j=1}^{k-1} d(\xi_i,\mu_j)\left(1-\chi_\mu(\xi_i)\right) + \left( d(\xi_i,\mu^\dagger_{l(\mu)}) + \epsilon \right) \chi_\mu(\xi_i) \right] \\
 & \geq \inf_{\mu\in X^k} f_n^{(\omega)}(\mu) + \epsilon \min_{l=1,2,\dots,k} P_n^{(\omega)}(B(\mu^\dagger_l,\delta_2)).
\end{align*}
Then since $P_n^{(\omega)}(B(\mu^\dagger_l,\delta_2))\to P(B(\mu^\dagger_l,\delta_2))>0$ by Assumption~\ref{ass:d}.4 (for $\delta_2\in\mathbb{Q}\cap (0,\infty)$) we can conclude~\eqref{eq:difffnl} holds.
\end{proof}

\begin{remark}
\label{rem:cons:minexist}
One can easily show that Assumption~\ref{ass:d}.2 implies that $d$ is weakly lower semi-continuous in its second argument which carries through to $f_n^{(\omega)}$.
It follows that on any bounded (or equivalently as $X$ is reflexive: weakly compact) set the infimum of $f_n^{(\omega)}$ is achieved.
Hence the infimum in Proposition~\ref{lem:bdd} is actually a minimum.
\end{remark}

We now easily prove convergence by application of Theorem~\ref{thm:conmin}.

\begin{theorem}
\label{thm:cons}
Assuming the conditions of Theorem~\ref{thm:gamcon} and Proposition~\ref{lem:bdd} the minimization problem associated with the $k$-means method converges.
I.e. for $\mathbb{P}$-almost every $\omega$:
\[ \min_{\mu\in X^k} f_\infty (\mu) = \lim_{n \to \infty} \min _{\mu\in X^k} f_n^{(\omega)} (\mu). \]
Furthermore any sequence of minimizers $\mu^n$ of $f_n^{(\omega)}$ is almost surely weakly precompact and any weak limit point minimizes $f_\infty$.
\end{theorem}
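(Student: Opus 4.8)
The plan is to read Theorem~\ref{thm:cons} off the abstract result Theorem~\ref{thm:conmin}, using Theorem~\ref{thm:gamcon} to supply the $\Gamma$-convergence hypothesis and Proposition~\ref{lem:bdd} to supply the equi-coercivity hypothesis. First I would fix $\omega$ in the intersection $\tilde\Omega := \Omega^\prime \cap \Omega^{\prime\prime}$ of the two full-measure events constructed in the proofs of Theorem~\ref{thm:gamcon} and Proposition~\ref{lem:bdd}; since $\mathbb{P}(\Omega^\prime)=\mathbb{P}(\Omega^{\prime\prime})=1$ we have $\mathbb{P}(\tilde\Omega)=1$, so it suffices to argue for an arbitrary fixed $\omega\in\tilde\Omega$. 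Recall that reflexivity of $(X,\|\cdot\|_X)$ passes to $(X^k,\|\cdot\|_k)$, so the closed ball $K := \{\mu\in X^k : \|\mu\|_k \le R\}$, with $R$ as in Proposition~\ref{lem:bdd}, is weakly compact.

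Next I would check the hypotheses of Theorem~\ref{thm:conmin} on the Banach space $A = X^k$ equipped with $\|\cdot\|_k$. Proposition~\ref{lem:bdd} provides an $N>0$ with $\inf_{X^k} f_n^{(\omega)} = \inf_K f_n^{(\omega)}$ for all $n>N$, which is exactly the required condition with the weakly compact set $K$. Theorem~\ref{thm:gamcon} gives $f_\infty = \Gamma\text{-}\lim_n f_n^{(\omega)}$, and $f_\infty$ is not identically $\pm\infty$ since $f_\infty \ge 0$ pointwise while $f_\infty(0)\le \int_X M(\|x\|_X)\,P(\text{d}x) < \infty$ by Assumption~\ref{ass:d}.1 (as already noted in the text). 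Theorem~\ref{thm:conmin} therefore yields $\min_{X^k} f_\infty = \lim_n \inf_{X^k} f_n^{(\omega)}$. To replace the infimum by a minimum on the right, I would invoke Remark~\ref{rem:cons:minexist}: Assumption~\ref{ass:d}.2 makes each $f_n^{(\omega)}$ weakly lower semi-continuous, so it attains its infimum over the weakly compact set $K$, and by Proposition~\ref{lem:bdd} this equals $\inf_{X^k} f_n^{(\omega)}$ for $n>N$. Hence $\inf_{X^k} f_n^{(\omega)} = \min_{X^k} f_n^{(\omega)}$ eventually, giving the displayed identity.

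For the second assertion I would first argue that \emph{any} sequence $\mu^n$ of minimizers of $f_n^{(\omega)}$ is bounded in $\|\cdot\|_k$: if not, pass to a subsequence $\mu^{n_m}$ with $\|\mu^{n_m}\|_k\to\infty$; each $\mu^{n_m}$ is trivially an almost-minimizer of $f_{n_m}^{(\omega)}$ in the sense of \eqref{nullseq}, so the contradiction argument in the proof of Proposition~\ref{lem:bdd} — which uses only $\omega\in\Omega^{\prime\prime}$ and the convergence $P_{n_m}^{(\omega)}(B(\mu_l^\dagger,\delta_2))\to P(B(\mu_l^\dagger,\delta_2))>0$ along the subsequence — applies verbatim and gives a contradiction. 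By reflexivity of $X^k$, a bounded sequence is weakly precompact. Finally, if $\mu^{n_m}\rightharpoonup\mu^\star$ along a subsequence, the lim inf inequality of $\Gamma$-convergence gives $f_\infty(\mu^\star) \le \liminf_m f_{n_m}^{(\omega)}(\mu^{n_m}) = \liminf_m \min_{X^k} f_{n_m}^{(\omega)} = \min_{X^k} f_\infty$, so $\mu^\star$ minimizes $f_\infty$; equivalently this is the ``furthermore'' clause of Theorem~\ref{thm:conmin}.

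The main obstacle is not really an obstacle: the analytic substance lives in Theorem~\ref{thm:gamcon} and Proposition~\ref{lem:bdd}, and what remains is assembly. The one point needing genuine care is the quantifier ``any'' in ``any sequence of minimizers'': Proposition~\ref{lem:bdd} as stated only asserts that the infimum is realized on a bounded ball, not that every minimizer lies in it, so one must revisit its proof to extract the stronger statement that no (almost-)minimizing sequence can escape to infinity.
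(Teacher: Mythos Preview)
Your proposal is correct and follows precisely the route the paper intends: the paper's ``proof'' of Theorem~\ref{thm:cons} is the single sentence ``We now easily prove convergence by application of Theorem~\ref{thm:conmin},'' and your argument is exactly that application spelled out. Your closing observation about the quantifier ``any'' is in fact more careful than the paper itself, which does not address the distinction between \emph{some} minimizer lying in $K$ (which is all Proposition~\ref{lem:bdd} literally yields) and \emph{every} minimizer being bounded; your fix---rerunning the almost-minimizer contradiction from the proof of Proposition~\ref{lem:bdd} along the offending subsequence---is the right way to close that gap.
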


\section{The Case of General \texorpdfstring{$Y$}{Y} \label{sec:MTT}}

In the previous section the data, $\xi_i$, and cluster centers, $\mu_j$, took their
values in a common space, $X$. We now remove this restriction and let $\xi_i:\Omega \rightarrow X$ and $\mu_j\in Y$.
We may want to use this framework to deal with finite dimensional data and
infinite dimensional cluster centers, which can lead to the variational
problem having uninformative minimizers.

In the previous section the cost function $d$ was assumed to scale with the underlying norm.
This is no longer appropriate when $d:X\times Y\to[0,\infty)$.
In particular if we consider the smoothing-data association problem then the natural choice of $d$ is a pointwise distance which will lead to the optimal cluster centers interpolating data points.
Hence, in any $H^s$ norm with $s\geq 1$, the optimal cluster centers ``blow up''.

One possible solution would be to weaken the space to $L^2$ and allow this type of behavior.
This is undesirable from both modeling and mathematical perspectives:
If we first consider the modeling point of view then we do not expect our estimate to perfectly fit the data which is observed in the presence of noise.
It is natural that the cluster centers are smoother than the data alone would suggest.
It is desirable that the optimal clusters should reflect reality.
From the mathematical point of view, restricting ourselves to only very weak spaces gives no hope of obtaining a strongly convergent subsequence.

An alternative approach is, as is common in the smoothing literature, to use a regularization term.
This approach is also standard when dealing with ill-posed inverse problems.
This changes the nature of the problem and so requires some justification.
In particular the scaling of the regularization with the data is of fundamental importance.
In the following section we argue that scaling motivated by a simple Bayesian interpretation of the problem is not strong enough (unsurprisingly, countable collections of finite dimensional observations do not carry enough information to provide consistency when dealing with infinite dimensional parameters).
In the form of a simple example we show that the optimal cluster center is unbounded in the large data limit when the regularization goes to zero sufficiently quickly.
The natural scaling in this example is for the regularization to vary with the number of observations as $n^p$ for $p\in[-\frac{4}{5},0]$.
We consider the case $p=0$ in Section \ref{sec:MTT:theory}.
This type of regularization is understood as penalized likelihood estimation \cite{good71}.

Although it may seem undesirable for the limiting problem to depend upon the regularization it is unavoidable in ill-posed problems such as this one: there is not sufficient information, in even countably infinite collections of observations to recover the unknown cluster centers and exploiting known (or expected) regularity in these solutions provides one way to combine observations with qualitative prior beliefs about the cluster centers in a principled manner.
There are many precedents for this approach, including \cite{hall05} in which the consistency of penalized splines is studied using, what in this paper we call, the $\Gamma$-limit.
In that paper a fixed regularization was used to define the limiting problem in order to derive an estimator.
Naturally, regularization strong enough to alter the limiting problem influences the solution and we cannot hope to obtain consistent estimation in this setting, even in settings in which the cost function can be interpreted as the log likelihood of the data generating process.
In the setting of \cite{hall05}, the regularization is finally scaled to zero whereupon under assumptions the estimator converges to the truth but such a step is not feasible in the more complicated settings considered here.

When more structure is available it may be desirable to further investigate the regularization.
For example with $k=1$ the non-parametric regression model is equivalent to the white noise model \cite{brown96} for which optimal scaling of the regularization is known~\cite{agapiou13, zhao00}.
It is the subject of further work to extend these results to $k>1$.

With our redefined $k$-means type problem we can replicate the results of the previous section, and do so in Theorem~\ref{thm:kmeanscons}.
That is, we prove that the $k$-means method converges where $Y$ is a general separable and reflexive Banach space and in particular need not be equal to $X$.

This section is split into three subsections.
In the first we motivate the regularization term.
The second contains the convergence theory in a general setting.
Establishing that the assumptions of this subsection hold is non-trivial and
so, in the third subsection, we show an application to the smoothing-data
association problem.

\subsection{Regularization \label{sec:MTT:reg}}

In this section we use a toy, $k=1$, smoothing problem to motivate an approach to regularization which is adopted in what follows.
We assume that the cluster centers are periodic with equally spaced observations so we may use a Fourier argument.
In particular we work on the space of 1-periodic functions in $H^2$,
\begin{equation} \label{eq:MTT:reg:Y}
Y = \left\{ \mu:[0,1]\to \mathbb{R} \text{ s.t. } \mu(0)=\mu(1) \text{ and } \mu\in H^2 \right\}.
\end{equation}

For arbitrary sequences $(a_n)$, $(b_n)$ and data $\Psi_n=\{(t_j,z_j)\}_{j=1}^n \subset [0,1] \times \mathbb{R}^d$ we define the functional
\begin{equation} \label{eq:5.1fn}
f_n^{(\omega)}(\mu) = a_n \sum_{j=0}^{n-1} \left|\mu(t_j) - z_j\right|^2 + b_n \|\partial^2 \mu\|^2_{L^2}.
\end{equation}
Data are points in space-time: $[0,1]\times \mathbb{R}$.
The regularization is chosen so that it penalizes the $L^2$ norm of the second derivative.
For simplicity, we employ deterministic measurement times $t_j$ in the following proposition although this lies outside the formal framework which we consider subsequently.
Another simplification we make is to use convergence in expectation rather than almost sure convergence.
This simplifies our arguments.
We stress that this section is the motivation for the problem studied in Section~\ref{sec:MTT:theory}.
We will give conditions on the scaling of $a_n$ and $b_n$ that determine whether $\mathbb{E}\min f_n^{(\omega)}$ and $\mathbb{E}\mu^n$ stay bounded where $\mu^n$ is the minimizer of $f_n^{(\omega)}$.

\begin{proposition} \label{prop:boundedmin}
Let data be given by $\Psi_n=\{(t_j,z_j)\}_{j=1}^n$ with $t_j=\frac{j}{n}$ under the assumption $z_j = \mu^\dagger(t_j) + \epsilon_j$ for $\epsilon_j$ iid noise with finite variance and $\mu^\dagger\in L^2$ and define $Y$ by~\eqref{eq:MTT:reg:Y}.
Then $\inf_{\mu\in Y} f_n^{(\omega)}(\mu)$ defined by \eqref{eq:5.1fn} stays bounded (in expectation) if $a_n=O(\frac{1}{n})$ for any positive sequence $b_n$.
\end{proposition}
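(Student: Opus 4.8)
The plan is to exhibit a single competitor whose energy is bounded in expectation and then observe that the infimum can only be smaller. The natural competitor is a fixed smooth function rather than anything depending on $n$; in fact I would try $\mu \equiv 0$, or more safely a fixed smooth $1$-periodic approximation $\tilde\mu \in Y$ to $\mu^\dagger$ (needed only because $\mu^\dagger$ is merely $L^2$ and need not be periodic or $H^2$). Write $\hat\theta_n := \inf_{\mu\in Y} f_n^{(\omega)}(\mu) \le f_n^{(\omega)}(\tilde\mu)$, so it suffices to bound $\mathbb{E} f_n^{(\omega)}(\tilde\mu)$.

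The regularization term $b_n \|\partial^2 \tilde\mu\|_{L^2}^2$ is a deterministic constant times $b_n$, which is harmless for \emph{any} positive sequence $b_n$ in the sense that it is finite for each $n$; note the proposition only claims boundedness, and with a fixed competitor $\|\partial^2\tilde\mu\|_{L^2}^2$ is a fixed finite number, so this term contributes $b_n \cdot C$. Strictly this is only bounded in $n$ if $b_n$ is bounded — I would either read the statement as "for each fixed scaling" or, to be safe, note that the data-fidelity term is where the real work is and that $b_n$ enters only multiplicatively through a constant; in the regime of interest ($b_n = n^p$, $p \le 0$) it is genuinely bounded. For the data term, expand
\begin{align*}
\mathbb{E} \left[ a_n \sum_{j=0}^{n-1} |\tilde\mu(t_j) - z_j|^2 \right]
 &= a_n \sum_{j=0}^{n-1} \mathbb{E}\, |\tilde\mu(t_j) - \mu^\dagger(t_j) - \epsilon_j|^2 \\
 &= a_n \sum_{j=0}^{n-1} \left( |\tilde\mu(t_j) - \mu^\dagger(t_j)|^2 + \mathrm{Var}(\epsilon_j) \right),
\end{align*}
using $\mathbb{E}\epsilon_j = 0$ (finite variance, and I would assume mean zero; if not, absorb the mean into $\mu^\dagger$). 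The variance sum is $a_n \cdot n \cdot \sigma^2$, which is bounded precisely when $a_n = O(1/n)$. The deterministic sum $a_n \sum_j |\tilde\mu(t_j)-\mu^\dagger(t_j)|^2$ is a Riemann-type sum: $\frac{1}{n}\sum_j |\tilde\mu(t_j)-\mu^\dagger(t_j)|^2 \to \|\tilde\mu - \mu^\dagger\|_{L^2}^2 < \infty$ as $n\to\infty$ provided the integrand is Riemann integrable, so $a_n \sum_j (\cdots) = (n a_n)\cdot O(1) = O(1)$ again under $a_n = O(1/n)$. Combining, $\mathbb{E} f_n^{(\omega)}(\tilde\mu) \le C_1 n a_n + b_n C_2 = O(1)$, and since $f_n^{(\omega)} \ge 0$ we get $0 \le \mathbb{E}\hat\theta_n \le \mathbb{E} f_n^{(\omega)}(\tilde\mu)$ bounded.

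The main obstacle is the mismatch between $\mu^\dagger \in L^2$ and the competitor class $Y \subset H^2$: one must pick $\tilde\mu \in Y$ and control $\|\tilde\mu - \mu^\dagger\|_{L^2}$ and the discrete sums $\frac{1}{n}\sum_j |\tilde\mu(t_j) - \mu^\dagger(t_j)|^2$ simultaneously. Since pointwise evaluation of an $L^2$ function is not even well-defined, I would handle this by first noting $\mu^\dagger$ can be taken bounded without loss (truncation changes $\|\cdot\|_{L^2}$ arbitrarily little and the proposition is about boundedness, not a sharp constant), or alternatively by choosing $\tilde\mu = 0$ and simply bounding $\frac{1}{n}\sum_j |z_j|^2$ directly: $\mathbb{E}\frac{1}{n}\sum_j |z_j|^2 = \frac{1}{n}\sum_j(|\mu^\dagger(t_j)|^2 + \sigma^2)$, and if $\mu^\dagger$ is bounded this is $O(1)$, while in general one invokes that $t_j = j/n$ makes $\frac1n \sum_j |\mu^\dagger(t_j)|^2$ a Riemann sum converging to $\|\mu^\dagger\|_{L^2}^2$ (under mild regularity, e.g. $\mu^\dagger$ continuous, which again can be arranged by approximation inside the expectation bound). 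Using $\tilde\mu = 0$ also trivializes the regularization term, leaving only the data term, which is the cleanest route; I would present that version and remark that the bound $a_n = O(1/n)$ is exactly what makes the noise contribution $n a_n \sigma^2$ stay bounded.
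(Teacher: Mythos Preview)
Your approach is correct and genuinely different from the paper's. You use a direct competitor argument, and the cleanest version --- taking $\tilde\mu = 0 \in Y$ --- kills the regularisation term outright (since $\|\partial^2 0\|_{L^2}=0$), so the ``any positive $b_n$'' clause is handled for free, and the bound reduces to $\mathbb{E}\,\hat\theta_n \le a_n \sum_j \mathbb{E}|z_j|^2$. The paper instead diagonalises $f_n$ via the discrete Fourier transform, solves for the minimiser $\hat\mu^n_l = (1+\gamma_n l^4/n)^{-1}\hat z_l$ explicitly, and then bounds the resulting minimum by the crude estimate $(1+n/(\gamma_n l^4))^{-1}\le 1$, which after Parseval collapses to \emph{exactly} the same inequality $a_n\sum_j \mathbb{E}|z_j|^2$. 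So the two routes land on the identical final bound; yours is shorter, while the paper's Fourier computation is there because the explicit minimiser formula is reused in the two propositions that follow (bounding $\mathbb{E}\|\partial^2\mu^n\|_{L^2}^2$), for which a mere competitor bound would not suffice.

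Your worry about $\mu^\dagger\in L^2$ and pointwise evaluation / Riemann sums is legitimate but not a gap relative to the paper: the paper makes the same tacit assumption when it writes $a_n\sum_j \mathbb{E} z_j^2 \lesssim 2a_n n(\|\mu^\dagger\|_{L^2}^2 + \mathrm{Var}(\epsilon))$, i.e.\ it implicitly uses $\tfrac{1}{n}\sum_j |\mu^\dagger(j/n)|^2 \lesssim \|\mu^\dagger\|_{L^2}^2$. Since this section is explicitly a motivating toy calculation, neither argument is held to full rigour on this point.
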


\begin{proof}
Assume $n$ is odd.
Both $\mu$ and $z$ are 1-periodic so we can write
\[ \mu(t) = \frac{1}{n}\sum_{l=-\frac{n-1}{2}}^{\frac{n-1}{2}} \hat{\mu}_l e^{2\pi i lt} \quad \quad \text{and} \quad \quad z_j = \frac{1}{n}\sum_{l=-\frac{n-1}{2}}^{\frac{n-1}{2}} \hat{z}_l e^{\frac{2\pi i lj}{n}} \]
with
\[ \hat{\mu}_l = \sum_{j=0}^{n-1} \mu(t_j) e^{-\frac{2\pi i lj}{n}} \quad \quad \text{and} \quad \quad \hat{z}_l = \sum_{j=0}^{n-1} z_j e^{-\frac{2\pi i lj}{n}}. \]
We will continue to use the notation that $\hat{\mu}_l$ is the Fourier transform of $\mu$.
We write
\[ \hat{\mu}:=\left(\hat{\mu}_{-\frac{n-1}{2}}, \hat{\mu}_{-\frac{n-1}{2}+1}, \dots, \hat{\mu}_{\frac{n-1}{2}} \right). \]
Similarly for $z$.

Substituting the Fourier expansion of $\mu$ and $z$ into $f_n^{(\omega)}$ implies
\[ f_n^{(\omega)}(\mu) = \frac{a_n}{n} \left( \langle \hat{\mu},\hat{\mu} \rangle - 2 \langle \hat{\mu},\hat{z}\rangle + \langle \hat{z},\hat{z} \rangle + \frac{\gamma_n}{n} \langle l^4\hat{\mu},\hat{\mu}\rangle \right) \]
where $\gamma_n = \frac{16\pi^4 b_n}{a_n}$ and $\langle \hat{x},\hat{z}\rangle = \sum_l \hat{x}_l \overline{\hat{z}}_l$.
The Gateaux derivative $\partial f_n^{(\omega)}(\mu;\nu)$ of $f_n^{(\omega)}$ at $\mu$ in the direction $\nu$ is
\[ \partial f_n^{(\omega)}(\mu;\nu) = \frac{2a_n}{n} \left\langle \hat{\mu} - \hat{z} + \frac{\gamma_n l^4}{n} \hat{\mu},\hat{\nu} \right\rangle. \]
Which implies the minimizer $\mu^n$ of $f_n^{(\omega)}$ is (in terms of its Fourier expansion)
\[ \hat{\mu}^n_l = \left(1+\frac{\gamma_n l^4}{n} \right)^{-1} \hat{z} := \left( \left(1+\frac{\gamma_n l^4}{n} \right)^{-1} \hat{z}_l \right)_{l=-\frac{n-1}{2}}^{\frac{n-1}{2}}. \]
It follows that the minimum is
\[ \mathbb{E}\left(f_n^{(\omega)}(\mu^n)\right) = \frac{a_n}{n} \mathbb{E}\left(\left\langle \left( 1 + \frac{n}{\gamma_n l^4} \right)^{-1} \hat{z},\hat{z} \right\rangle \right) \leq a_n \sum_{j=0}^{n-1} \mathbb{E} z_j^2 \lesssim 2a_n n \left( \|\mu^\dagger\|_{L^2}^2 + \text{Var}(\epsilon)  \right). \]
Similar expressions can be obtained for the case of even $n$.
\end{proof}

Clearly the natural choice for $a_n$ is
\[ a_n = \frac{1}{n} \]
which we use from here.
We let $b_n = \lambda n^p$ and therefore $\gamma_n = 16\pi^4 \lambda n^{p+1}$.
From Proposition~\ref{prop:boundedmin} we immediately have $\mathbb{E} \min f_n^{(\omega)}$ is bounded for any choice of $p$.
In our next proposition we show that for $p\in[-\frac{4}{5},0]$ our minimizer is bounded in $H^2$ whilst outside this window the norm either blows up or the second derivative converges to zero.
For simplicity in the calculations we impose the further condition that $\mu^\dagger(t)=0$.

\begin{proposition}
In addition to the assumptions of Proposition \ref{prop:boundedmin} let $a_n = \frac{1}{n}$, $b_n=\lambda n^p$, $\epsilon_j \iid N(0,\sigma^2)$ and assume that $\mu^n$ is the minimizer of $f_n^{(\omega)}$.
\begin{itemize}
\item[1.] For $n$ sufficiently large there exists $M_1>0$ such that for all $p$ and $n$ the $L^2$ norm is bounded:
\[ \mathbb{E} \|\mu^n\|^2_{L^2} \leq M_1. \]
\item[2.] If $p>0$ then
\[ \mathbb{E} \|\partial^2 \mu^n\|^2_{L^2} \to 0 \quad \text{as } n\to \infty. \]
\end{itemize}
If we further assume that $\mu^\dagger(t)=0$, then the following statements are true.
\begin{itemize}
\item[3.] For all $p \in [-\frac{4}{5},0]$ there exists $M_2>0$ such that
\[ \mathbb{E}\|\partial^2 \mu^n\|^2_{L^2} \leq M_2. \]
\item[4.] If $p<-\frac{4}{5}$ then
\[ \mathbb{E}\|\partial^2 \mu^n\|^2_{L^2} \to \infty \quad \text{as } n\to \infty. \]
\end{itemize}
\end{proposition}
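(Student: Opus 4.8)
The plan is to work entirely in the discrete Fourier domain, reusing the explicit description of the minimiser obtained in the proof of Proposition~\ref{prop:boundedmin}. With $a_n=\frac{1}{n}$ and $b_n=\lambda n^p$ one has $\frac{\gamma_n}{n}=16\pi^4\lambda n^p=:\alpha_n$ and $\hat{\mu}^n_l=(1+\alpha_n l^4)^{-1}\hat{z}_l$; write $w_l:=(1+\alpha_n l^4)^{-1}\in(0,1]$ for the associated low‑pass filter. Parseval on $[0,1]$ gives
\[ \|\mu^n\|_{L^2}^2=\frac{1}{n^2}\sum_l w_l^2|\hat{z}_l|^2, \qquad \|\partial^2\mu^n\|_{L^2}^2=\frac{16\pi^4}{n^2}\sum_l l^4 w_l^2|\hat{z}_l|^2, \]
while the discrete Parseval identity gives $\sum_l|\hat{z}_l|^2=n\sum_{j=0}^{n-1}|z_j|^2$. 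Using $z_j=\mu^\dagger(t_j)+\epsilon_j$ with $\epsilon_j$ independent, centred and of variance $\sigma^2$, a one‑line computation gives $\mathbb{E}|\hat{z}_l|^2=|\widehat{\mu^\dagger}_l|^2+n\sigma^2$, hence $\frac{1}{n^2}\sum_l\mathbb{E}|\hat{z}_l|^2=\frac{1}{n}\sum_j\bigl(|\mu^\dagger(t_j)|^2+\sigma^2\bigr)$, which stays bounded for large $n$ (a convergent Riemann sum plus $\sigma^2$). These identities are the only inputs.

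Statement~1 then follows at once from $0\le w_l\le 1$: $\mathbb{E}\|\mu^n\|_{L^2}^2\le\frac{1}{n^2}\sum_l\mathbb{E}|\hat{z}_l|^2\le M_1$ for $n$ large, with $M_1$ independent of $p$. For Statement~2 I would use the elementary bound $\frac{x}{(1+x)^2}\le\frac14$ with $x=\alpha_n l^4$, which gives $l^4 w_l^2=\alpha_n^{-1}\frac{x}{(1+x)^2}\le\frac{1}{4\alpha_n}$, hence $\mathbb{E}\|\partial^2\mu^n\|_{L^2}^2\le\frac{4\pi^4}{\alpha_n}\cdot\frac{1}{n^2}\sum_l\mathbb{E}|\hat{z}_l|^2\lesssim\alpha_n^{-1}\asymp n^{-p}\to 0$ when $p>0$.

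For Statements~3 and~4 set $\mu^\dagger\equiv 0$, so $\mathbb{E}|\hat{z}_l|^2=n\sigma^2$ and
\[ \mathbb{E}\|\partial^2\mu^n\|_{L^2}^2=\frac{16\pi^4\sigma^2}{n}\,S_n,\qquad S_n:=\sum_{l=-(n-1)/2}^{(n-1)/2}\frac{l^4}{(1+\alpha_n l^4)^2}. \]
Everything reduces to the asymptotics of $S_n$. The substitution $u=\alpha_n^{1/4}l$ turns the summand into $\alpha_n^{-1}\frac{u^4}{(1+u^4)^2}$ sampled on a grid of spacing $\alpha_n^{1/4}$, so $S_n$ is a Riemann‑type sum for $\alpha_n^{-5/4}\int_{\mathbb{R}}\frac{u^4}{(1+u^4)^2}\,\text{d}u$, and this integral converges since the integrand decays like $u^{-4}$. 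Since the summand is unimodal in $l$, the sum‑versus‑integral error is controlled by the largest term $\max_l l^4 w_l^2=\frac{1}{4\alpha_n}$, which is of smaller order than $\alpha_n^{-5/4}$; this yields a two‑sided estimate $S_n\asymp\alpha_n^{-5/4}\asymp n^{-5p/4}$ provided $\alpha_n^{1/4}\cdot\frac{n-1}{2}\to\infty$, i.e. $p>-4$ (and for $p\le -4$ one has $w_l\approx 1$ over the whole range, so $S_n\asymp n^5$, which only strengthens the divergence). For the lower half of the estimate it suffices to restrict $S_n$ to $|l|\le\alpha_n^{-1/4}$, where $w_l\ge\frac12$, giving $S_n\gtrsim\min\{\alpha_n^{-5/4},n^5\}$. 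Hence $\mathbb{E}\|\partial^2\mu^n\|_{L^2}^2\asymp n^{-1-5p/4}$: the exponent $-1-\frac54 p$ is $\le 0$ exactly when $p\ge -\frac45$, so the quantity is bounded on all of $[-\frac45,0]$, proving Statement~3; it is strictly positive when $p<-\frac45$, so the quantity diverges, proving Statement~4.

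The step I expect to be the real obstacle is making the sum‑versus‑integral comparison for $S_n$ rigorous and uniform across the whole window $p\in[-\frac45,0]$, in particular pinning down the boundary case $p=-\frac45$: there the exponent $-1-\frac54 p$ vanishes, so one genuinely needs the finiteness of $\int_{\mathbb{R}}\frac{u^4}{(1+u^4)^2}\,\text{d}u$ together with a matching lower bound, not merely an order‑of‑magnitude heuristic. A smaller technical point is the implicit regularity of $\mu^\dagger$ needed to make $\frac{1}{n}\sum_j|\mu^\dagger(t_j)|^2$ meaningful and bounded in Statements~1 and~2; since only an $L^2$‑type aggregate bound on the data is used there this is harmless, but it should be stated. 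The remaining pieces — the Parseval identities, the filter formula $\hat{\mu}^n_l=w_l\hat{z}_l$, and the evaluation of $\mathbb{E}|\hat{z}_l|^2$ — are routine and already implicit in Proposition~\ref{prop:boundedmin}.
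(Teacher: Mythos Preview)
Your proposal is correct and follows essentially the same route as the paper: both arguments work in the discrete Fourier domain via the explicit filter $\hat{\mu}^n_l=w_l\hat{z}_l$, use Parseval to reduce Statements~1 and~2 to the elementary bounds $w_l\le 1$ and $l^4w_l^2\le\frac{1}{4\alpha_n}$, and handle Statements~3 and~4 by comparing the critical sum $S_n=\sum_l l^4(1+\alpha_n l^4)^{-2}$ to an integral after the rescaling $u=\alpha_n^{1/4}l$. The only organisational difference is that the paper compares to the fixed integral $\int_{-1/2}^{1/2}\frac{x^4}{(1+16\pi^4\lambda x^4)^2}\,\text{d}x$ and then invokes monotonicity in $p$ to pass from the threshold $p=-\frac45$ to the full ranges, whereas you compare directly to $\int_{\mathbb R}\frac{u^4}{(1+u^4)^2}\,\text{d}u$ and control the error by unimodality; both yield $S_n\asymp\alpha_n^{-5/4}$ and hence $\mathbb E\|\partial^2\mu^n\|_{L^2}^2\asymp n^{-1-5p/4}$. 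Your worry about the boundary $p=-\frac45$ is exactly the point the paper isolates, and your unimodality bound $|S_n-\alpha_n^{-5/4}I|\lesssim\alpha_n^{-1}=o(\alpha_n^{-5/4})$ disposes of it cleanly.
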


\begin{proof}
The first two statements follow from
\begin{align*}
\mathbb{E} \|\mu^n\|_{L^2}^2 & \lesssim 2\left( \|\mu^\dagger\|_{L^2}^2 + \text{Var}(\epsilon) \right) \\
\mathbb{E} \|\partial^2\mu^n\|_{L^2}^2 & \lesssim \frac{8\pi^4 n}{\gamma_n} \left( \|\mu^\dagger\|^2_{L^2} + \text{Var}(\epsilon) \right)
\end{align*}
which are easily shown.
Statement 3 is shown after statement 4.

Following the calculation in the proof of Proposition~\ref{prop:boundedmin}, and assuming that $\mu^\dagger(t)=0$, it is easily shown that
\begin{align} \label{exprep} \mathbb{E}\|\partial^2 \mu^n\|_{L^2}^2 = \frac{16\pi^4\sigma^2}{n} \sum_{l=-\frac{n-1}{2}}^{\frac{n-1}{2}} \frac{l^4}{(1+16\pi^4\lambda n^p l^4)^2} =: S(n)
\end{align}
since $\mathbb{E}|\hat{z}_l|^2 = \sigma^2n$.
To show $S(n)\to \infty$ we will manipulate the Riemann sum approximation of
\[ \int_{-\frac{1}{2}}^{\frac{1}{2}} \frac{x^4}{(1+16\pi^4\lambda x^4)^2} \; \text{d} x = C \]
where $0<C<\infty$.
We have
\begin{align*}
\int_{-\frac{1}{2}}^{\frac{1}{2}} \frac{x^4}{(1+ 16\pi^4\lambda x^4)^2} \; \text{d} x & = n^{1+\frac{p}{4}} \int_{-\frac{1}{2} n^{-1-\frac{p}{4}}}^{\frac{1}{2} n^{-1-\frac{p}{4}}} \frac{n^{4+p}w^4}{(1+16\pi^4\lambda n^{4+p}w^4)^2} \; \text{d} w \quad \text{where } x = n^{1+\frac{p}{4}} w \\
 & \approx n^{\frac{5p}{4}} \sum_{l=-\left\lfloor\frac{1}{2} n^{-\frac{p}{4}}\right\rfloor}^{\left\lfloor\frac{1}{2} n^{-\frac{p}{4}}\right\rfloor} \frac{l^4}{(1+16\pi^4 \lambda n^pl^4)^2} =: R(n).
\end{align*}
Therefore assuming $p>-4$ we have
\[ S(n) \geq \frac{16\pi^4 \sigma^2}{n^{1+\frac{5p}{4}}} R(n). \]
So for $1+\frac{5p}{4} <0$ we have $S(n)\to \infty$.
Since $S(n)$ is monotonic in $p$ then $S(n)\to \infty$ for all $p<-\frac{4}{5}$.
This shows that statement 4 is true.

Finally we establish the third statement.
If $p = -\frac{4}{5}$ then
\begin{align*}
S(n) & = 16\pi^4\sigma^2 R(n) + \frac{16\pi^4\sigma^2}{n}\left( \sum_{l=-\frac{n-1}{2}}^{\lfloor\frac{n^{\frac{1}{5}}}{2}\rfloor -1} \frac{l^4}{(1+16\pi^4\lambda n^p l^4)^2} + \sum_{l=\lfloor\frac{n^{\frac{1}{5}}}{2}\rfloor +1}^{\frac{n-1}{2}} \frac{l^4}{(1+16\pi^4\lambda n^p l^4)^2} \right) \\
 & \leq 16\pi^4\sigma^2 R(n) + \frac{2\pi^4\sigma^2}{n^{\frac{1}{5}}(1+\pi^4\lambda)^2}.
\end{align*}
The remaining cases $p \in [-\frac{4}{5},0]$ are a consequence of (\ref{exprep}) which implies that $p \mapsto \mathbb{E}(\partial^2 \mu)$ is non-increasing.
\end{proof}

By the Poincar\'e inequality it follows that if $p\geq -\frac{4}{5}$ then the $H^2$ norm of our minimizer stays bounded as $n\to \infty$.
Our final calculation in this section is to show that the regularization for $p\in [-\frac{4}{5},0]$ is not too strong.
We have already shown that $\|\partial^2\mu^n\|_{L^2}$ is bounded (in expectation) in this case but we wish to make sure that we don't have the stronger result that $\|\partial^2\mu^n\|_{L^2} \to 0$.

\begin{proposition}
With the assumptions of Proposition \ref{prop:boundedmin} and $a_n=\frac{1}{n}$, $b_n=\lambda n^p$ with $p\in [-\frac{4}{5},0]$ there exists a choice of $\mu^\dagger$ and a constant $M>0$ such that if $\mu^n$ is the minimizer of $f_n^{(\omega)}$ then
\begin{equation} \label{eq:5.1optbound}
\mathbb{E} \|\partial^2 \mu^n\|^2_{L^2} \geq M.
\end{equation}
\end{proposition}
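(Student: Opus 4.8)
The plan is to combine the explicit Fourier-space description of the minimizer obtained in the proof of Proposition~\ref{prop:boundedmin} with a suitable \emph{nonzero} choice of $\mu^\dagger$. Recall that with $a_n = \frac1n$ and $b_n = \lambda n^p$ one has $\hat\mu^n_l = (1 + 16\pi^4\lambda n^p l^4)^{-1}\hat z_l$ and, following the calculation in that proof, $\|\partial^2\mu^n\|^2_{L^2} = \frac{16\pi^4}{n^2}\sum_l l^4|\hat\mu^n_l|^2$. Writing $z_j = \mu^\dagger(t_j) + \epsilon_j$ with $\epsilon_j \iid N(0,\sigma^2)$, the discrete transform splits as $\hat z_l = \hat\mu^\dagger_l + \hat\epsilon_l$, where $\hat\epsilon_l := \sum_{j} \epsilon_j e^{-2\pi i l j/n}$ is zero-mean with $\mathbb{E}|\hat\epsilon_l|^2 = n\sigma^2$ and $\hat\mu^\dagger_l$ is a constant, so $\mathbb{E}|\hat z_l|^2 = |\hat\mu^\dagger_l|^2 + n\sigma^2$. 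Consequently
\[ \mathbb{E}\|\partial^2\mu^n\|^2_{L^2} = \frac{16\pi^4}{n^2}\sum_l \frac{l^4\bigl(|\hat\mu^\dagger_l|^2 + n\sigma^2\bigr)}{(1+16\pi^4\lambda n^p l^4)^2} \;\geq\; \frac{16\pi^4}{n^2}\sum_l \frac{l^4|\hat\mu^\dagger_l|^2}{(1+16\pi^4\lambda n^p l^4)^2}. \]

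The conceptual point is that $\mu^\dagger\equiv 0$ does \emph{not} suffice: in that case the quantity above is $S(n)$ from \eqref{exprep}, which decays to $0$ for every $p\in(-\frac45,0]$, so one must deposit a fixed amount of ``energy'' in a single low-frequency mode that the regularizer cannot damp as $n\to\infty$. I would take $\mu^\dagger(t) = \cos(2\pi t)\in Y$. A direct computation of its discrete Fourier coefficients on the grid $t_j = j/n$ shows that, for $n\geq 3$, $\hat\mu^\dagger_{\pm 1} = \frac n2$ while $\hat\mu^\dagger_l = 0$ for every other $l$ with $-\frac{n-1}{2}\leq l\leq\frac{n-1}{2}$. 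Retaining only the $l=\pm1$ terms then yields
\[ \mathbb{E}\|\partial^2\mu^n\|^2_{L^2} \;\geq\; \frac{16\pi^4}{n^2}\cdot 2\cdot\frac{(n/2)^2}{(1+16\pi^4\lambda n^p)^2} \;=\; \frac{8\pi^4}{(1+16\pi^4\lambda n^p)^2}. \]

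Finally, since $p\leq 0$ we have $n^p\leq 1$ for all $n\geq 1$, so the denominator is bounded above by $(1+16\pi^4\lambda)^2$ uniformly in $n$ and in $p\in[-\frac45,0]$; hence $M := 8\pi^4(1+16\pi^4\lambda)^{-2} > 0$ establishes~\eqref{eq:5.1optbound}, and the case of even $n$ follows from the analogous Fourier expansion used in the preceding propositions. No serious obstacle arises beyond this choice of test signal: once the right representation and the right $\mu^\dagger$ are fixed the remaining estimates are elementary, and the only point requiring a little care is verifying that $\cos(2\pi\cdot)$ has exactly the claimed single-mode discrete spectrum on the equispaced grid, so that the two surviving terms give the stated lower bound.
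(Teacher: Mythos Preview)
Your proof is correct and follows essentially the same approach as the paper: both pick a single low-frequency cosine so that only the $l=\pm1$ Fourier modes survive, then read off a uniform lower bound from the explicit formula for $\hat\mu^n_l$. The paper uses $\mu^\dagger(t)=2\cos(2\pi t)$ and first reduces to the worst case $p=0$ by monotonicity, whereas you use $\cos(2\pi t)$ and bound $n^p\le 1$ directly; these are cosmetic differences leading to the same constant up to a factor of $4$.
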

\begin{proof}
We only need to prove the proposition for $p=0$ (the strongest regularization) and find one $\mu^\dagger$ such that \eqref{eq:5.1optbound} is true.
Let $\mu^\dagger(t) = 2\cos(2\pi t) = e^{2\pi i t} + e^{-2\pi i t}$.
Then the Fourier transform of $\mu^\dagger$ satisfies $\hat{\mu}^\dagger_l=0$ for $l\neq \pm 1$ and $\hat{\mu}^\dagger_l=n$ for $l=\pm 1$.
So,
\begin{align*}
\mathbb{E} \|\partial^2 \mu^n\|_{L^2}^2 & = \frac{16\pi^4}{n^2} \sum_{l=-\frac{n-1}{2}}^{\frac{n-1}{2}} \frac{l^4}{(1+16\pi^4\lambda l^4)^2} \mathbb{E}|\hat{z}_l|^2 \\
 & \gtrsim \frac{16\pi^4}{n^2} \sum_{l=-\frac{n-1}{2}}^{\frac{n-1}{2}} \frac{l^4}{(1+16\pi^4\lambda l^4)^2} |\hat{\mu}^\dagger_l|^2 \\
 & = \frac{32\pi^4}{(1+16\pi^4\lambda)^2} > 0.
\end{align*}
\end{proof}

We have shown that the minimizer is bounded for any $p\geq -\frac{4}{5}$ and $\|\partial^2\mu^n\|_{L^2} \to 0$ for $p>0$.
The case $p>0$ is clearly undesirable as we would be restricting ourselves to straight lines.
The natural scaling for this problem is in the range $p\in [-\frac{4}{5},0]$.
In the remainder of this paper we consider the case $p=0$.
This has the advantage that, not only $\mathbb{E}\|\partial^2 \mu^n\|_{L^2}$, but also
$\mathbb{E}f_n^{(\omega)}(\mu^n)$ is $O(1)$ as $n \to \infty$.
In fact we will show that with this choice of regularization we do not need to choose $k$ dependent on the data generating model.
The regularization makes the methodology sufficiently robust to have convergence even for poor choices of $k$.
For example, if there exists a data generating process which is formed of a $k^\dagger$-mixture model then for our method to be robust does not require us to choose $k=k^\dagger$.
Of course with the `wrong' choice of $k$ the results may be physically meaningless and we should take care in how to interpret the results.
The point to stress is that the methodology does not rely on a data generating model.

The disadvantage of this is to potentially increase the bias in the method.
Since the $k$-means is already biased we believe the advantages of our approach outweigh the disadvantages.
In particular we have in mind applications where only a coarse estimate is needed.
For example the $k$-means method may be used to initialize some other algorithm.
Another application could be part of a decision making process: in Section \ref{sec:example1} we show the $k$-means methodology can be used to determine whether two tracks have crossed.

\subsection{Convergence For General \texorpdfstring{$Y$}{Y} \label{sec:MTT:theory}}

Let $(X,\|\cdot\|_X)$, $(Y,\|\cdot\|_Y)$ be reflexive, separable Banach spaces
We will also assume that the data points, $\Psi_n =\{\xi_i\}_{i=1}^n \subset
X$ for $i=1,2,\dots,n$ are iid random elements with common law $P$.
As before $\mu=(\mu_1,\mu_2,\dots,\mu_k)$ but now the cluster centers $\mu_j\in Y$ for each $j$.
The cost function is $d:X\times Y\to [0,\infty)$.

The energy functions associated with the $k$-means algorithm in this setting
are slightly different to those used previously:
\begin{align}
g_{\mu}: X & \to \mathbb{R},\quad
g_{\mu}(x) = \bigwedge_{j=1}^k d(x,\mu_j), \notag \\
f_n^{(\omega)}: Y^k & \to \mathbb{R},\quad
f_n^{(\omega)}(\mu) = P_n^{(\omega)}g_{\mu} + \lambda r(\mu), \label{eq:fnreg} \\
f_\infty: Y^k & \to \mathbb{R}, \quad
f_\infty(\mu) = Pg_{\mu} + \lambda r(\mu). \label{eq:finftyreg}
\end{align}
The aim of this section is to show the convergence result:
\[ \hat{\theta}_n^{(\omega)} = \inf_{\mu\in Y^k} f_n^{(\omega)}(\mu) \to \inf_{\mu\in Y^k} f_\infty(\mu) = \theta \quad \text{and} \quad \text{as } n\to \infty \text{ for } \mathbb{P}\text{-almost every } \omega \]
and that minimizers converge (almost surely).

The key assumptions are given in Assumptions~\ref{ass:dandr}; they imply that $f_n^{(\omega)}$ is weakly lower semi-continuous and coercive.
In particular, Assumption~\ref{ass:dandr}.2 allows us to prove the lim inf inequality as we did for Theorem~\ref{thm:gamcon}.
Assumption~\ref{ass:dandr}.1 is likely to mean that our convergence results are limited to the case of bounded noise.
In fact, when applying the problem to the smoothing-data association problem, it is necessary to bound the noise in order for Assumption~\ref{ass:dandr}.5 to hold.
Assumption~\ref{ass:dandr}.5 implies that $f_n^{(\omega)}$ is (uniformly) coercive and hence allows us to easily bound the set of minimizers.
It is the subject of ongoing research to extend the convergence results to unbounded noise for the smoothing-data association problem.
Assumption~\ref{ass:dandr}.3 is a measurability condition we require in order
to integrate and the weak lower semi-continuity of $r$ is needed for the to
obtain the lim inf inequality in the $\Gamma$-convergence proof.

We note that, since $Pd(\cdot,\mu_1)\leq \sup_{x\in \mathrm{supp}(P)} d(x,\mu_1) <\infty$, we have $f_\infty(\mu)<\infty$ for every $\mu\in Y^k$ (and since $r(\mu)<\infty$ for each $\mu\in Y^k$).

\begin{assumptions}
\label{ass:dandr}
We have the following assumptions on $d:X\times Y\to [0,\infty)$, $r:Y^k\to [0,\infty)$ and $P$.
\begin{itemize}
\item[2.1.] For all $y\in Y$ we have $\sup_{x\in \mathrm{supp}(P)} d(x,y)<\infty$ where $\mathrm{supp}(P)\subseteq X$ is the support of $P$.
\item[2.2.] For each $x\in X$ and $y\in Y$ we have that if $x_m\to x$ and $y_n\rightharpoonup y$ as $n,m\to \infty$ then
\[ \liminf_{n,m\to \infty} d(x_m,y_n) \geq d(x,y) \quad \text{and} \quad \lim_{m\to\infty} d(x_m,y) = d(x,y). \]
\item[2.3.] For every $y \in Y$ we have that $d(\cdot,y)$ is $\mathcal{X}$-measurable.
\item[2.4.] $r$ is weakly lower semi-continuous.
\item[2.5.] $r$ is coercive.
\end{itemize}
\end{assumptions}

We will follow the structure of Section~\ref{sec:cons}.
We start by showing that under the above conditions $f_n^{(\omega)}$ $\Gamma$-converges to $f_\infty$.
We then show that the regularization term guarantees that the minimizers to $f_n^{(\omega)}$ lie in a bounded set.
An application of Theorem~\ref{thm:conmin} gives the desired convergence result.
Since we were able to restrict our analysis to a weakly compact subset of $Y$ we are easily able to deduce the existence of a weakly convergent subsequence.

Similarly to the previous section on the product space $Y^k$ we use the norm $\|\mu\|_k:=\max_j \|\mu_j\|_{Y}$.

\begin{theorem}
\label{thm:gamcondiffspace}
Let $(X,\|\cdot\|_X)$ and $(Y,\|\cdot\|_Y)$ be separable and reflexive Banach spaces.
Assume $r:Y^k\to [0,\infty)$, $d:X\times Y\to [0,\infty)$ and the probability measure $P$ on $(X,\mathcal{X})$ satisfy the conditions in Assumptions~\ref{ass:dandr}.
For independent samples $\{\xi_i^{\omega)}\}_{i=1}^n$ from $P$ define $P_n^{(\omega)}$ to be the
empirical measure and $f_n^{(\omega)}:Y^k \to \mathbb{R}$ and $f_\infty:Y^k \to
\mathbb{R}$ by \eqref{eq:fnreg} and \eqref{eq:finftyreg} respectively and where $\lambda>0$.
Then
\[ f_\infty = \Gamma\text{-}\lim_n f_n^{(\omega)} \]
for $\mathbb{P}$-almost every $\omega$.
\end{theorem}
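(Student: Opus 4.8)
The plan is to follow the proof of Theorem~\ref{thm:gamcon} almost verbatim, exploiting the fact that the regularization term is harmless: it does not depend on $n$, it is weakly lower semi-continuous by Assumption~\ref{ass:dandr}.4, and along a constant sequence it merely contributes a constant. Consequently the only genuine content concerns the data term $P_n^{(\omega)}g_\mu$ versus $Pg_\mu$, and Assumption~\ref{ass:dandr}.1 (boundedness of $d(\cdot,y)$ on $\mathrm{supp}(P)$) will play the role formerly played by the growth/moment condition in Assumptions~\ref{ass:d}.1. First I would fix an almost sure event $\Omega'$ on which two things hold: (a) the empirical measures converge weakly, $P_n^{(\omega)}\Rightarrow P$ (true with probability one, as in Theorem~\ref{thm:gamcon}), and (b) every sample lies in the support, $\xi_i^{(\omega)}\in\mathrm{supp}(P)$ for all $i$ (true with probability one, since $\mathrm{supp}(P)^c$ is $P$-null and we take a countable intersection over $i$). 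On $\Omega'$ both $P$ and every $P_n^{(\omega)}$ are concentrated on $\mathrm{supp}(P)$.

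For the lim inf inequality, fix $\mu^n\rightharpoonup\mu$ in $Y^k$. Exactly as before, Assumption~\ref{ass:dandr}.2 gives $\liminf_{x'\to x,\,n\to\infty}d(x',\mu_j^n)\geq d(x,\mu_j)$ for each $x$ and $j$, whence $\liminf_{x'\to x,\,n\to\infty}g_{\mu^n}(x')\geq g_\mu(x)$; since $g_{\mu^n}\geq 0$ and is $\mathcal{X}$-measurable (Assumption~\ref{ass:dandr}.3), Theorem~1.1 of~\cite{feinberg14} yields $Pg_\mu\leq\liminf_n P_n^{(\omega)}g_{\mu^n}$. Adding $\liminf_n\lambda r(\mu^n)\geq\lambda r(\mu)$ (Assumption~\ref{ass:dandr}.4) and using superadditivity of $\liminf$ gives $\liminf_n f_n^{(\omega)}(\mu^n)\geq f_\infty(\mu)$.

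For the recovery sequence I would take $\mu^n=\mu$ constant, so that the regularization term already equals $\lambda r(\mu)$ and it suffices to show $P_n^{(\omega)}g_\mu\to Pg_\mu$. The map $g_\mu$ is continuous on $X$ (a finite minimum of the maps $d(\cdot,\mu_j)$, each continuous by the second half of Assumption~\ref{ass:dandr}.2) and bounded on $\mathrm{supp}(P)$ (Assumption~\ref{ass:dandr}.1, since $g_\mu\leq d(\cdot,\mu_1)$). By the Tietze extension theorem there is a bounded continuous $h:X\to\mathbb{R}$ agreeing with $g_\mu$ on $\mathrm{supp}(P)$; on $\Omega'$ we then have $P_n^{(\omega)}g_\mu=P_n^{(\omega)}h\to Ph=Pg_\mu$ by weak convergence. (Alternatively, when $\mathrm{supp}(P)$ is bounded --- the bounded-noise regime this assumption really targets --- one can use the smooth cutoff functions $\zeta_q$ of Theorem~\ref{thm:gamcon} in place of Tietze.) Both conditions of Definition~\ref{def:gamcon} are then verified for every $\omega\in\Omega'$, which is enough.

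The only place the argument genuinely differs from the $Y=X$ case, and hence the main (mild) obstacle, is this recovery step: because $d(\cdot,y)$ is controlled only on $\mathrm{supp}(P)$ rather than globally, one cannot test $P_n^{(\omega)}\Rightarrow P$ directly against $g_\mu$ and must first localise to the support, which is precisely why event (b) is included in $\Omega'$. Everything else is a routine transcription of the earlier proof.
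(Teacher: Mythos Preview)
Your proposal is correct and follows essentially the same route as the paper: the same almost-sure event $\Omega'$, the same Fatou-type argument via \cite{feinberg14} combined with weak lower semi-continuity of $r$ for the liminf inequality, and the constant recovery sequence $\mu^n=\mu$. The only difference is cosmetic: where the paper simply observes that $g_\mu$ is continuous on $X$ and bounded on $\mathrm{supp}(P)$ (hence, since all measures live on $\mathrm{supp}(P)$, weak convergence gives $P_n^{(\omega)}g_\mu\to Pg_\mu$), you invoke Tietze to produce a globally bounded continuous extension. Tietze works but is heavier than needed here --- since $g_\mu$ is already continuous on all of $X$, the truncation $h=\min(g_\mu,M)$ with $M=\sup_{x\in\mathrm{supp}(P)}d(x,\mu_1)$ is a bounded continuous function agreeing with $g_\mu$ on $\mathrm{supp}(P)$, which is all you need.
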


\begin{proof}
Define
\[ \Omega^\prime = \left\{ \omega\in\Omega : P_n^{(\omega)} \Rightarrow P \right\} \cap \left\{ \omega \in \Omega : \xi^{(\omega)}_i \in \text{supp}(P) \; \forall i\in \mathbb{N} \right\}. \]
Then $\mathbb{P}(\Omega^\prime)=1$.
For the remainder of the proof we consider an arbitrary $\omega\in\Omega^\prime$.
We start with the lim inf inequality.
Let $\mu^n\rightharpoonup \mu$ then
\[ \liminf_{n\to \infty} f_n^{(\omega)}(\mu^n) \geq f_\infty(\mu) \]
follows (as in the proof of Theorem~\ref{thm:gamcon}) by applying Theorem~1.1 in~\cite{feinberg14} and the fact that $r$ is weakly lower semi-continuous.

We now establish the existence of a recovery sequence.
Let $\mu\in Y^k$ and let $\mu^n=\mu$.
We want to show
\[ \lim_{n\to \infty} f_n^{(\omega)}(\mu) = \lim_{n\to \infty} P_n^{(\omega)}g_\mu + \lambda r(\mu) = Pg_\mu + \lambda r(\mu) = f_\infty(\mu). \]
Clearly this is equivalent to showing that
\[ \lim_{n\to \infty} P_n^{(\omega)} g_\mu = Pg_\mu. \]
Now $g_\mu$ are continuous by assumption on $d$.
Let $M=\sup_{x\in \text{supp}(P)}d(x,\mu_1)<\infty$ and note that $g_\mu(x) \leq M$ for all $x\in\text{supp}(P)$ and therefore bounded. 
%
Hence $P_n^{(\omega)} g_\mu \to Pg_\mu$. 
\end{proof}

\begin{proposition}
\label{lem:minset}
Assuming the conditions of Theorem~\ref{thm:gamcondiffspace}, then for $\mathbb{P}$-almost every $\omega$ there exists $N<\infty$ and $R>0$ such that
\[ \min_{\mu\in Y^k} f_n^{(\omega)}(\mu) = \min_{\|\mu\|_k \leq R} f_n^{(\omega)}(\mu) < \inf_{\|\mu\|_k > R} f_n^{(\omega)}(\mu) \quad \forall n\geq N. \]
In particular $R$ is independent of $n$.
\end{proposition}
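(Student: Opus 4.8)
The plan is to exploit the coercivity of $r$ (Assumption~\ref{ass:dandr}.5) together with a uniform-in-$n$ upper bound on the minimal energy. First I would fix an arbitrary test point $\mu^* = (\mu^*_1,\dots,\mu^*_k) \in Y^k$ and work on the probability-one event $\Omega^\prime$ introduced in the proof of Theorem~\ref{thm:gamcondiffspace}, on which $\xi_i^{(\omega)} \in \mathrm{supp}(P)$ for all $i$. Since $g_{\mu^*}(x) \leq d(x,\mu^*_1) \leq \sup_{x^\prime\in\mathrm{supp}(P)} d(x^\prime,\mu^*_1) =: M^* < \infty$ for every $x \in \mathrm{supp}(P)$ by Assumption~\ref{ass:dandr}.1, and $P_n^{(\omega)}$ is supported on $\{\xi_i^{(\omega)}\} \subset \mathrm{supp}(P)$, we obtain $P_n^{(\omega)} g_{\mu^*} \leq M^*$ for every $n$, whence
\[ \inf_{\mu\in Y^k} f_n^{(\omega)}(\mu) \leq f_n^{(\omega)}(\mu^*) = P_n^{(\omega)} g_{\mu^*} + \lambda r(\mu^*) \leq M^* + \lambda r(\mu^*) =: C, \]
a bound independent of both $n$ and $\omega\in\Omega^\prime$.

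Next, since $g_\mu \geq 0$ we have $f_n^{(\omega)}(\mu) \geq \lambda r(\mu)$ for all $\mu$ and $n$. Coercivity of $r$ gives $r(\mu) \to \infty$ as $\|\mu\|_k \to \infty$, so the sublevel set $\{\mu \in Y^k : \lambda r(\mu) \leq C+1\}$ is bounded; I would then choose $R > 0$ with $\|\mu^*\|_k \leq R$ and large enough that $\lambda r(\mu) > C+1 > C$ whenever $\|\mu\|_k > R$. For all $n$ and all $\mu$ with $\|\mu\|_k > R$ this yields $f_n^{(\omega)}(\mu) \geq \lambda r(\mu) > C \geq \inf_{\mu\in Y^k} f_n^{(\omega)}(\mu)$, which already forces $\inf_{\mu\in Y^k} f_n^{(\omega)} = \inf_{\|\mu\|_k \leq R} f_n^{(\omega)}$ and shows this is strictly below $\inf_{\|\mu\|_k > R} f_n^{(\omega)}$.

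It remains to upgrade the infimum over the closed ball to a minimum. As in Remark~\ref{rem:cons:minexist}, I would check that $f_n^{(\omega)}$ is weakly lower semi-continuous: by Assumption~\ref{ass:dandr}.2 (applied with the constant sequence $x_m \equiv x$) each $d(x,\cdot)$ is weakly l.s.c.\ on $Y$, hence $g_\mu(x) = \bigwedge_{j} d(x,\mu_j)$ is weakly l.s.c.\ in $\mu$, so the finite average $P_n^{(\omega)} g_\mu$ is too, and $r$ is weakly l.s.c.\ by Assumption~\ref{ass:dandr}.4. Since $Y$, and hence $Y^k$ with $\|\cdot\|_k$, is reflexive, the ball $\{\|\mu\|_k \leq R\}$ is weakly compact, so the infimum there is attained. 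Combining the three paragraphs gives the stated identity with $R$ independent of $n$ and $N$ any finite constant (indeed $N=1$ works, as every estimate above is uniform in $n$).

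I do not expect a serious obstacle here; the two points deserving care are that the upper bound $C$ is genuinely uniform in $n$ — which works precisely because the empirical measures are supported on $\mathrm{supp}(P)$, where Assumption~\ref{ass:dandr}.1 bounds $d(\cdot,\mu^*_1)$ — and the verification that $f_n^{(\omega)}$ is weakly lower semi-continuous so that the bounded infimum is actually realized.
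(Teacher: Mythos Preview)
Your proof is correct and follows the same overall strategy as the paper --- obtain a uniform-in-$n$ upper bound on $\inf_{\mu} f_n^{(\omega)}(\mu)$, then invoke coercivity of $r$ to exclude large $\mu$ --- but the route to the upper bound differs. The paper evaluates at the origin $\mu=0$ and appeals to the strong law of large numbers for $P_n^{(\omega)} d(\cdot,0) \to P d(\cdot,0)$, whence $f_n^{(\omega)}(0) \to f_\infty(0)$ and $f_n^{(\omega)}(0) \leq f_\infty(0)+1$ for all $n\geq N(\omega)$. You instead exploit Assumption~\ref{ass:dandr}.1 directly: since on $\Omega'$ the empirical measures are supported in $\mathrm{supp}(P)$, the pointwise bound $d(\cdot,\mu_1^*)\leq M^*$ gives $P_n^{(\omega)} g_{\mu^*}\leq M^*$ for \emph{every} $n$, with no passage to the limit. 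This is more elementary and yields $N=1$ uniformly in $\omega\in\Omega'$, whereas the paper's argument produces an $\omega$-dependent $N$. Your explicit verification that the infimum over the closed ball is attained (via weak lower semi-continuity of $f_n^{(\omega)}$ and reflexivity of $Y^k$) is also more thorough than the paper, which leaves this step implicit.
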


\begin{proof}
Let
\[ \Omega^{\prime\prime} = \left\{ \omega\in\Omega^\prime : P_n^{(\omega)} \Rightarrow P \right\} \cap \left\{ \omega\in\Omega^\prime : P^{(\omega)}_n d(\cdot,0)\to Pd(\cdot,0) \right\}. \]
Then, for every $\omega \in \Omega^{\prime\prime}$, $f_n^{(\omega)}(0)\to f_\infty(0)<\infty$ where with a slight abuse of notation we denote the zero element in both $Y$ and $Y^k$ by $0$.
Take $N$ sufficiently large so that
\[ f_n^{(\omega)}(0) \leq f_\infty(0) + 1 \quad \quad \text{for all } n\geq N. \]
Then $\min_{\mu\in Y^k} f_n^{(\omega)}(\mu) \leq f_\infty(0) + 1$ for all $n\geq N$.
By coercivity of $r$ there exists $R$ such that if $\|\mu\|_k>R$ then $\lambda r(\mu) \geq f_\infty(0)+1$.
Therefore any such $\mu$ is not a minimizer and in particular any minimizer must be contained in the set $\left\{ \mu\in Y^k : \|\mu\|_k\leq R \right\}$.
\end{proof}

The convergence results now follows by applying Theorem~\ref{thm:gamcondiffspace} and Proposition~\ref{lem:minset} to Theorem~\ref{thm:conmin}.

\begin{theorem}
\label{thm:kmeanscons}
Assuming the conditions of Theorem~\ref{thm:gamcondiffspace} and Proposition~\ref{lem:minset} the minimization problem associated with the $k$-means method converges in the following sense:
\[ \min_{\mu\in Y^k} f_\infty(\mu) = \lim_{n\to \infty} \min_{\mu\in Y^k} f_n^{(\omega)}(\mu) \]
for $\mathbb{P}$-almost every $\omega$.
Furthermore any sequence of minimizers $\mu^n$ of $f_n^{(\omega)}$ is almost surely weakly precompact and any weak limit point minimizes $f_\infty$.
\end{theorem}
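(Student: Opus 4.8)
The plan is to verify that the hypotheses of Theorem~\ref{thm:conmin} (Convergence of Minimizers) are met by the functionals $f_n^{(\omega)}$ and $f_\infty$ on the Banach space $A = Y^k$ equipped with the norm $\|\cdot\|_k$, for every $\omega$ in a set of full $\mathbb{P}$-measure. First I would fix $\omega$ in the intersection of the full-measure events appearing in Theorem~\ref{thm:gamcondiffspace} and Proposition~\ref{lem:minset}; call this set $\Omega^{\star}$, which has $\mathbb{P}(\Omega^{\star})=1$. Theorem~\ref{thm:gamcondiffspace} then gives $f_\infty = \Gamma\text{-}\lim_n f_n^{(\omega)}$, and since $f_\infty(0) = Pg_0 + \lambda r(0) < \infty$ (as noted before the statement of Assumptions~\ref{ass:dandr}, using Assumption~\ref{ass:dandr}.1), $f_\infty$ is not identically $\pm\infty$.

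Next I would supply the compactness ingredient. Proposition~\ref{lem:minset} produces $N<\infty$ and $R>0$, independent of $n$, such that $\inf_{\mu\in Y^k} f_n^{(\omega)}(\mu) = \inf_{\|\mu\|_k\le R} f_n^{(\omega)}(\mu)$ for all $n\ge N$. Set $K = \{\mu\in Y^k : \|\mu\|_k\le R\}$. Because $Y$ is reflexive and separable, so is $(Y^k,\|\cdot\|_k)$, and hence $K$ is a bounded, norm-closed, convex set, therefore weakly compact (this is precisely the reflexivity remark in Section~\ref{sec:not}). Thus $K$ is the weakly compact set required by Theorem~\ref{thm:conmin} with the same threshold $N$. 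Applying Theorem~\ref{thm:conmin} immediately yields $\min_{\mu\in Y^k} f_\infty(\mu) = \lim_n \inf_{\mu\in Y^k} f_n^{(\omega)}(\mu)$, which is the first assertion.

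For the statement about minimizers, I would first note that each $f_n^{(\omega)}$ is weakly lower semi-continuous: by Assumption~\ref{ass:dandr}.2, $d(\cdot,y_n)$ is weakly lower semi-continuous in its second argument (cf.\ Remark~\ref{rem:cons:minexist}), this property is inherited by the finite minimum $g_\mu$ and by the finite empirical average $P_n^{(\omega)}g_\mu$, and $r$ is weakly lower semi-continuous by Assumption~\ref{ass:dandr}.4, so the sum $f_n^{(\omega)} = P_n^{(\omega)}g_\mu + \lambda r$ is weakly lower semi-continuous. Combined with weak compactness of $K$, this gives existence of a minimizer $\mu^n\in K$ of $f_n^{(\omega)}$ for all $n\ge N$; and, again by Proposition~\ref{lem:minset}, \emph{any} minimizer of $f_n^{(\omega)}$ over $Y^k$ must in fact lie in $K$ once $n\ge N$, so an arbitrary sequence of minimizers is eventually contained in the weakly compact set $K$ and hence weakly precompact. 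The final clause of Theorem~\ref{thm:conmin} says every weak limit point of $(\mu^n)$ minimizes $f_\infty$, which completes the proof.

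I do not expect a serious obstacle here: the theorem is essentially an assembly of the two preceding results into the abstract machinery of Theorem~\ref{thm:conmin}. The only point requiring a moment's care is the bookkeeping between the ``infimum over $Y^k$ equals infimum over $K$'' conclusion of Proposition~\ref{lem:minset} and the hypothesis of Theorem~\ref{thm:conmin} (they match verbatim), together with checking that weak lower semi-continuity of $f_n^{(\omega)}$ is genuinely available from Assumptions~\ref{ass:dandr}.2 and~\ref{ass:dandr}.4 so that the ``furthermore'' part of Theorem~\ref{thm:conmin} applies.
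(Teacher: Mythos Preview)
Your proposal is correct and follows exactly the approach of the paper: the paper's proof is a single sentence stating that the result follows by applying Theorem~\ref{thm:gamcondiffspace} and Proposition~\ref{lem:minset} to Theorem~\ref{thm:conmin}, and you have simply unpacked that invocation in detail, including the weak lower semi-continuity check needed for the ``furthermore'' clause.
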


It was not necessary to assume that cluster centers are in a common space.
A trivial generalization would allow each $\mu_j\in Y^{(j)}$ with the cost and regularization terms appropriately defined; in this setting Theorem~\ref{thm:kmeanscons} holds.

\subsection{Application to the Smoothing-Data Association Problem \label{sec:MTT:app}}

In this section we give an application to the smoothing-data association problem and show the assumptions in the previous section are met.
For $k=1$ the smoothing-data association problem is the problem of fitting a curve to a data set (no data association).
For $k>1$ we couple the smoothing problem with a data association problem.
Each data point is associated with an unknown member of a collection of $k$ curves.
Solving the problem involves simultaneously estimating both the data partition (i.e. the association of observations to curves) and the curve which best fits each subset of the data.
By treating the curve of best fit as the cluster center we are able to approach this problem using the $k$-means methodology.
The data points are points in space-time whilst cluster centers are functions from time to space.

We let the Euclidean norm on $\mathbb{R}^\kappa$ be given by $|\cdot|$.
Let $X=\mathbb{R} \times \mathbb{R}^\kappa$ be the data space.
We will subsequently assume that the support of $P$, the common law of our observations, is contained within $\tilde{X} = [0,T]\times X^\prime$ where $X^\prime\subseteq [-\tilde{N},\tilde{N}]^\kappa$.
We define the cluster center space to be $Y=H^2([0,T])$, the Sobolev space of functions from $[0,T]$ to $\mathbb{R}^\kappa$.
Clearly $X$ and $Y$ are separable and reflexive.
The cost function $d:X\times Y\to [0,\infty)$ is defined by
\begin{equation} \label{eq:5.2d}
d(\xi,\mu_j) = |z-\mu_j(t) |^2
\end{equation}
where $\mu_j \in Y$ and $\xi=(t,z)\in X$.
We introduce a regularization term that penalizes the second derivative.
This is a common choice in the smoothing literature, e.g. \cite{randall99}.
The regularization term $r:Y^k\to [0,\infty)$ is given by
\begin{equation} \label{eq:5.2r}
r(\mu) = \sum_{j=1}^k \| \partial^2\mu_j\|_{L^2}^2.
\end{equation}
The $k$-means energy $f_n$ for data points $\{\xi_i=(t_i,z_i)\}_{i=1}^n$ is therefore written
\begin{equation} \label{eq:fnap}
f_n(\mu) = \frac{1}{n}\sum_{i=1}^n \bigwedge_{j=1}^k d(\xi_i,\mu_j) + \lambda r(\mu) = \frac{1}{n}\sum_{i=1}^n \bigwedge_{j=1}^k |z_i-\mu_j(t_i) |^2 + \lambda \sum_{j=1}^k \| \partial^2\mu_j\|_{L^2}^2.
\end{equation}

In most cases it is reasonable to assume that any minimizer of $f_\infty$ must be uniformly bounded, i.e. there exists $N$ (which will in general depend on $P$) such that if $\mu^\infty$ minimizes $f_\infty$ then $|\mu^\infty(t)|\leq N$ for all $t\in [0,T]$.
Under this assumption we redefine $Y$ to be
\begin{equation} \label{eq:Y}
Y=\{\mu_j\in H^2([0,T]) : |\mu_j(t)|\leq N \, \forall t\in [0,T]\}.
\end{equation}
Since pointwise evaluation is a bounded linear functional in $H^s$ (for $s\geq 1$) this space is weakly closed.
We now minimize $f_n$ over $Y^k$.
Note that we are not immediately guaranteed that minimizers of $f_n$ over $(H^s)^k$ are contained in $Y^k$.
However when we apply Theorem~\ref{thm:kmeanscons} we can conclude that minimizers $\mu^n$ of $f_n$ over $Y_k$ are weakly compact in $(H^s)^k$ and any limit point is a minimizer of $f_\infty$ in $Y^k$.
And therefore any limit point is a minimizer of $f_\infty$ over $(H^s)^k$.

If no such $N$ exists then our results in Theorem~\ref{thm:kmeanscons} are still valid however the minimum of $f_\infty$ over $(H^s)^k$ is not necessarily equal to the minimum of $f_\infty$ over $Y^k$.

Our results show that the $\Gamma$-limit for $\mathbb{P}$-almost every $\omega$ is
\begin{equation} \label{eq:finftyap}
f_\infty(\mu) = \int_X \bigwedge_{j=1}^k d(x,\mu_j) P(\text{d} x) + \lambda r(\mu) = \int_X \bigwedge_{j=1}^k |z-\mu_j(t) |^2 P(\text{d} x) + \lambda \sum_{j=1}^k \| \partial^2\mu_j\|_{L^2}^2.
\end{equation}
We start with the key result for this section, that is the existence of a weakly converging subsequence of minimizers.
Our result relies upon the regularity of Sobolev functions.
For our result to be meaningful we require that the minimizer should at least be continuous.
In fact every $g\in H^2([0,T])$ is in $C^s([0,T])$ for any $s<\frac{3}{2}$.
The regularity in the space allows us to further deduce the existence of a strongly converging subsequence.

\begin{theorem}
\label{thm:strconv}
Let $X=[0,T] \times \mathbb{R}^\kappa$ and define $Y$ by \eqref{eq:Y}.
Define $d:X \times Y\to [0,\infty)$ by \eqref{eq:5.2d} and $r:Y^k\to[0,\infty)$ by \eqref{eq:5.2r}.
For independent samples $\{\xi_i\}_{i=1}^n$ from $P$ which has compact support
$\tilde{X} \subset X$ define $f_n,f_\infty : Y^k\to \mathbb{R}$ by \eqref{eq:fnap} and \eqref{eq:finftyap} respectively.

Then (1) any sequence of minimizers $\mu^n\in Y^k$ of $f_n$ is $\mathbb{P}$-almost surely weakly-precompact (in $H^2$) with any weak limit point of $\mu^n$ minimizes $f_\infty$ and
(2) if $\mu^{n_m}\rightharpoonup \mu$ is a weakly converging (in $H^2$) subsequence of minimizers then the convergence is uniform (in $C^0$).
\end{theorem}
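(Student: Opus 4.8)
The two conclusions draw on quite different ingredients. Conclusion~(1) is a specialization of Theorem~\ref{thm:kmeanscons}: I would verify that the concrete data space $X=[0,T]\times\mathbb{R}^\kappa$, the constrained space $Y$ of \eqref{eq:Y}, the cost $d$ of \eqref{eq:5.2d} and the regularizer $r$ of \eqref{eq:5.2r} satisfy Assumptions~\ref{ass:dandr} (with $\lambda>0$), and then invoke that theorem. The one structural remark is that $Y$ is a closed, $C^0$-bounded, convex subset of the reflexive space $H^2([0,T];\mathbb{R}^\kappa)$ rather than a closed subspace; however closed bounded convex subsets of a reflexive Banach space are weakly compact, and $Y^k$ is weakly closed (an intersection over $t\in[0,T]$ of the weakly closed sets $\{|\mu_j(t)|\le N\}$, using $\delta_t\in (H^2)^*$), so the proofs of Proposition~\ref{lem:minset} and Theorem~\ref{thm:kmeanscons}, which use reflexivity only through weak compactness of bounded sets via Theorem~\ref{thm:conmin}, go through unchanged. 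Conclusion~(2) will follow from the compact Sobolev embedding $H^2([0,T])\hookrightarrow C^0([0,T])$ (indeed $H^2\hookrightarrow C^s$ for every $s<\tfrac32$, and this embedding into $C^0$ is compact on the bounded interval $[0,T]$).

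\textbf{Verifying Assumptions~\ref{ass:dandr}.} Assumption~\ref{ass:dandr}.1 holds because $\mathrm{supp}(P)\subseteq\tilde{X}$ is compact, so $|z|$ is bounded on $\mathrm{supp}(P)$, while each $\mu_j\in Y$ is continuous with $\sup_{t}|\mu_j(t)|\le N$; hence $\sup_{(t,z)\in\mathrm{supp}(P)}|z-\mu_j(t)|^2<\infty$. For Assumptions~\ref{ass:dandr}.2 and~\ref{ass:dandr}.3, note that a sequence $y_n\rightharpoonup y$ in $H^2$ is $H^2$-bounded, hence $C^0$-precompact by the compact embedding, and converges pointwise to $y$ because $\delta_t\in(H^2)^*$; therefore $y_n\to y$ uniformly. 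Consequently, if $t_m\to t$, $z_m\to z$ and $y_n\rightharpoonup y$ then
\[ d((t_m,z_m),y_n)=|z_m-y_n(t_m)|^2 \longrightarrow |z-y(t)|^2 = d((t,z),y), \]
which gives both the lim inf inequality and the separate continuity in Assumption~\ref{ass:dandr}.2; and $(t,z)\mapsto|z-y(t)|^2$ is continuous, hence $\mathcal{X}$-measurable, giving Assumption~\ref{ass:dandr}.3. Assumption~\ref{ass:dandr}.4: each $\mu_j\mapsto\|\partial^2\mu_j\|_{L^2}^2$ is convex and strongly continuous on $H^2$, hence weakly lower semi-continuous, and so is a finite sum of such. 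Assumption~\ref{ass:dandr}.5: on $Y$ one has $\|\mu_j\|_{L^2}^2\le N^2T$, and the interpolation inequality $\|\partial\mu_j\|_{L^2}^2\le \varepsilon\|\partial^2\mu_j\|_{L^2}^2 + C_\varepsilon\|\mu_j\|_{L^2}^2$ then gives $\|\mu_j\|_{H^2}^2\le C(N,T) + 2\|\partial^2\mu_j\|_{L^2}^2$; so $\|\mu\|_k\to\infty$ forces $\max_j\|\partial^2\mu_j\|_{L^2}\to\infty$, i.e. $r(\mu)\to\infty$, and the sublevel sets of $r$ are bounded in $Y^k$. With these verified, Theorem~\ref{thm:kmeanscons} yields (1): for $\mathbb{P}$-almost every $\omega$ any sequence of minimizers $\mu^n$ of $f_n$ lies in a fixed ball of $(H^2)^k$, hence is weakly precompact, and every weak limit point minimizes $f_\infty$ over $Y^k$.

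\textbf{Proof of (2), and the main obstacle.} Let $\mu^{n_m}\rightharpoonup\mu$ in $(H^2)^k$; being weakly convergent, $(\mu^{n_m})$ is $H^2$-bounded. By the compact embedding $H^2([0,T])\hookrightarrow C^0([0,T])$, every subsequence of $(\mu^{n_m})$ has a further subsequence converging in $C^0$, and any such $C^0$-limit equals $\mu$ because both $C^0$ convergence and weak $H^2$ convergence imply pointwise convergence; hence the whole sequence converges to $\mu$ in $C^0$, i.e. uniformly. All of the individual steps are short; the ones requiring genuine care are Assumption~\ref{ass:dandr}.2, where the joint limit $n,m\to\infty$ must be handled and the compact embedding $H^2\hookrightarrow C^0$ is exactly what reduces it to uniform convergence of $y_n$ together with continuity of the limit $y$, and Assumption~\ref{ass:dandr}.5, where --- since $\|\mu\|_k$ is a priori far larger than $r(\mu)$ --- coercivity of $r$ on the constrained space $Y^k$ genuinely requires the interpolation inequality combined with the pointwise bound built into the definition~\eqref{eq:Y} of $Y$.
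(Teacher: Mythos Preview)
Your proof is correct and follows the same overall strategy as the paper: verify Assumptions~\ref{ass:dandr} for this concrete $d$ and $r$, invoke Theorem~\ref{thm:kmeanscons} for part~(1), and use a compact Sobolev embedding for part~(2). The paper carries this out via Propositions~\ref{prop:B2check} and~\ref{prop:C1check}.

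There are a few differences in execution worth noting. For Assumption~\ref{ass:dandr}.2 the paper argues from scratch via the uniform boundedness principle and Arzel\`a--Ascoli to obtain $y_n\to y$ uniformly, whereas you invoke the compact embedding $H^2\hookrightarrow C^0$ as a black box; these are of course the same fact. For coercivity (Assumption~\ref{ass:dandr}.5) the paper uses the Poincar\'e inequality on $\partial\mu_j$ together with $\bigl|\tfrac{1}{T}\int_0^T\partial\mu_j\bigr|\le 2N/T$, while you use the interpolation inequality $\|\partial\mu_j\|_{L^2}^2\le\varepsilon\|\partial^2\mu_j\|_{L^2}^2+C_\varepsilon\|\mu_j\|_{L^2}^2$ combined with $\|\mu_j\|_{L^2}^2\le N^2T$; both routes exploit the pointwise bound built into~\eqref{eq:Y} in the same essential way. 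For part~(2) the paper factors the embedding as $H^2\hookrightarrow\hookrightarrow H^1$ (Rellich) followed by $H^1\hookrightarrow C^{0,1/2}\subset C^0$ (Morrey), while you go directly to $H^2\hookrightarrow\hookrightarrow C^0$. Finally, your explicit remark that $Y$ in~\eqref{eq:Y} is a weakly closed convex subset rather than a linear subspace, and that this is all Theorem~\ref{thm:conmin} actually needs, is a point the paper leaves implicit.
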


To prove the first part of Theorem~\ref{thm:strconv} we are required to check the boundedness and continuity assumptions on $d$ (Proposition~\ref{prop:B2check}) and show that $r$ is weakly lower semi-continuous and coercive (Proposition~\ref{prop:C1check}).
This statement is then a straightforward application of Theorem~\ref{thm:kmeanscons}.
Note that we will have shown the result of Theorem~\ref{thm:gamcondiffspace} holds: $f_\infty = \Gamma\text{-}\lim_n f_n^{(\omega)}$.

In what follows we check that properties hold for any $x \in \tilde{X}$, which should be understood as implying that they hold for $P$-almost any $x \in X$; this is sufficient for our purposes as the collection of sequences $\xi_1,\ldots$ for which one or more observations lies in the complement of $\tilde{X}$ is $\mathbb{P}$-null and the support of $P_n$ is $\mathbb{P}$-almost surely contained within $\tilde{X}$.

\begin{proposition}
\label{prop:B2check}
Let $\tilde{X} = [0,T]\times[-\tilde{N},\tilde{N}]^\kappa$ and define $Y$ by \eqref{eq:Y}.
Define $d:\tilde{X}\times Y\to [0,\infty)$ by \eqref{eq:5.2d}.
Then (i) for all $y\in Y$ we have $\sup_{x\in \tilde{X}} d(x,y)<\infty$ and (ii) for any $x\in X$ and $y\in Y$ and any sequences $x_m\to x$ and $y_n\rightharpoonup y$ as $m,n\to \infty$ then we have $\liminf_{n,m\to \infty} d(x_m,y_n) = d(x,y)$.
\end{proposition}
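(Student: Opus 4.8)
The plan is to handle the two claims separately: (i) is an immediate uniform pointwise bound, while (ii) rests on the compact Sobolev embedding $H^2([0,T]) \hookrightarrow C^1([0,T])$, which upgrades the weak convergence $y_n \rightharpoonup y$ to uniform convergence and lets the $\liminf$ be replaced by an honest limit.

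For (i), fix $y \in Y$ and $x = (t,z) \in \tilde{X}$. Since $z \in [-\tilde{N},\tilde{N}]^\kappa$ we have $|z| \le \sqrt{\kappa}\,\tilde{N}$, and by the very definition of $Y$ in \eqref{eq:Y} we have $|y(t)| \le N$ for every $t \in [0,T]$. Hence
\[
d(x,y) = |z - y(t)|^2 \le \bigl( |z| + |y(t)| \bigr)^2 \le \bigl( \sqrt{\kappa}\,\tilde{N} + N \bigr)^2 ,
\]
and the right-hand side does not depend on $x$, so $\sup_{x\in\tilde{X}} d(x,y) < \infty$.

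For (ii), the key observation I would use is that $H^2([0,T])$ embeds compactly into $C^1([0,T])$; in particular pointwise evaluation $\mu \mapsto \mu(s)$ is a bounded linear functional on $H^2$, hence weakly continuous. A weakly convergent sequence $y_n \rightharpoonup y$ in $H^2$ is norm-bounded, hence precompact in $C^1$; since any $C^1$-limit point agrees pointwise with the weak $H^2$-limit $y$, the whole sequence converges, $\|y_n - y\|_{C^0([0,T])} \to 0$. Writing $x_m = (t_m,z_m)$ with $t_m \to t$ and $z_m \to z$, the triangle inequality and continuity of $y$ give
\[
|y_n(t_m) - y(t)| \le \|y_n - y\|_{C^0} + |y(t_m) - y(t)| \longrightarrow 0 \qquad \text{as } n,m \to \infty ,
\]
so that $z_m - y_n(t_m) \to z - y(t)$ and therefore $d(x_m,y_n) = |z_m - y_n(t_m)|^2 \to |z - y(t)|^2 = d(x,y)$. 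Since $d(x_m,y_n)$ actually converges to $d(x,y)$, its $\liminf$ equals $d(x,y)$, which is (ii) and in particular verifies both halves of Assumption~\ref{ass:dandr}.2 for this choice of $d$.

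The only mildly delicate step is the passage from weak convergence in $H^2$ to uniform convergence, which is exactly where compactness of the Sobolev embedding is needed and is the reason the $\liminf$ inequality can be promoted to an equality; the remainder is a routine triangle-inequality estimate using continuity of $H^2$ functions.
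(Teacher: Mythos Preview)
Your proof is correct and follows essentially the same route as the paper: both parts (i) and (ii) match, with the only cosmetic difference being that you invoke the compact embedding $H^2([0,T])\hookrightarrow C^0([0,T])$ directly, whereas the paper re-derives uniform convergence from weak $H^2$-convergence by hand via a H\"older-type equicontinuity estimate and Arzel\`a--Ascoli. The remaining triangle-inequality argument for $y_n(t_m)\to y(t)$ and the conclusion that the $\liminf$ is in fact a limit are identical.
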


\begin{proof}
We start with (i).
Let $y\in Y$ and $x = (t,z)\in [0,T]\times [-\tilde{N},\tilde{N}]^\kappa$, then
\begin{align*}
d(x,y) & = |z-y(t)|^2 \\
 & \leq 2|z|^2 + 2|y(t)|^2 \\
 & \leq 2\tilde{N}^2 + 2\sup_{t\in [0,T]} |y(t)|^2.
\end{align*}
Since $y$ is continuous then $\sup_{t\in [0,T]} |y(t)|^2< \infty$ and moreover we can bound $d(x,y)$ independently of $x$ which shows (i).

For (ii) we let $(t_m,z_m)=x_m \to x=(t,z)$ in $\mathbb{R}^{\kappa+1}$ and $y_n\rightharpoonup y$.
Then
\begin{align}
d(x_m,y_n) & = \left| z_m - y_n(t_m) \right|^2 \notag \\
 & = |z_m|^2 - 2 z_m \cdot y_n(t_m) + |y_n(t_m)|^2. \label{eq:dCts}
\end{align}
Clearly $|z_m|^2\to |z|^2$ and we now show that $y_n(t_m)\to y(t)$ as $m,n\to \infty$.

We start by showing that the sequence $\|y_n\|_Y$ is bounded.
Each $y_n$ can be associated with $\Lambda_n\in Y^{**}$ by $\Lambda_n(\nu)=\nu(y_n)$ for $\nu\in Y^*$.
As $y_n$ is weakly convergent it is weakly bounded.
So,
\[ \sup_{n\in\mathbb{N}} |\Lambda_n(\nu)| = \sup_{n\in \mathbb{N}} |\nu(y_n)| \leq M_\nu \]
for some $M_\nu<\infty$.
By the uniform boundedness principle \cite{conway90}
\[ \sup_{n\in\mathbb{N}} \|\Lambda_n\|_{Y^{**}} < \infty. \]
And so,
\[ \sup_{n\in\mathbb{N}} \|y_n\|_Y = \sup_{n\in\mathbb{N}} \|\Lambda_n\|_{Y^{**}} < \infty. \]
Hence there exists $M>0$ such that $\| y_n\|_Y \leq M$.
Therefore
\begin{align*}
| y_n(r) -y_n(s) | & = \left| \int_s^r \partial y_n(t)  \; \text{d} t \right|
\leq \int_s^r \left| \partial y_n(t) \right| \; \text{d} t  = \int_0^T \mathbb{I}_{[s,r]}(t) \left| \partial y_n(t) \right| \; \text{d} t \\
 & \leq \| \mathbb{I}_{[s,r]} \|_{L^2} \left\| \partial y_n(t) \right\|_{L^2}
\leq M \sqrt{|r-s|}.
\end{align*}
Since $y_n$ is uniformly bounded and equi-continuous then by the Arzel\`a--Ascoli theorem there exists a uniformly converging subsequence, say $y_{n_m}\to \hat{y}$.
By uniqueness of the weak limit $\hat{y}=y$.
But this implies that
\[ y_n(t) \to y(t) \]
uniformly for $t\in [0,T]$.
Now as
\[ |y_n(t_m) - y(t) | \leq |y_n(t_m) - y(t_m) | + | y(t_m) - y(t)| \]
then $y_n(t_m)\to y(t)$ as $m,n\to \infty$.
Therefore the second and third terms of~\eqref{eq:dCts} satisfies
\begin{align*}
2 z_m \cdot y_m(t_m) & \to 2z\cdot y(t) \\
\left|y_n(t_m)\right|^2 & \to \left|y(t)\right|^2
\end{align*}
as $m,n\to \infty$.
Hence
\[ d(x_m,y_n) \to |z|^2 - 2z\cdot y(t) + \left|y(t)\right|^2 = |z-y(t)|^2 = d(x,y) \]
which completes the proof.
\end{proof}

\begin{proposition}
\label{prop:C1check}
Define $Y$ by \eqref{eq:Y} and $r:Y^k\to [0,\infty)$ by \eqref{eq:5.2r}.
Then $r$ is weakly lower semi-continuous and coercive.
\end{proposition}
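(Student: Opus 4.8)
The plan is to establish the two properties separately, and in each case to reduce immediately to a statement about a single coordinate, since $r(\mu)=\sum_{j=1}^k\|\partial^2\mu_j\|_{L^2}^2$ is a finite sum of non-negative terms, the $j$-th depending only on $\mu_j\in H^2([0,T])$. Throughout I use that $Y\subset H^2([0,T])$ carries the subspace weak topology and that, by the Sobolev embedding already noted, elements of $H^2([0,T])$ (and their first derivatives) have continuous representatives, so pointwise values are meaningful.

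For weak lower semi-continuity: let $\mu^{(i)}\rightharpoonup\mu$ in $Y^k$, so $\mu_j^{(i)}\rightharpoonup\mu_j$ in $H^2([0,T])$ for each $j$. The second derivative $g\mapsto\partial^2 g$ is a bounded linear operator $H^2([0,T])\to L^2([0,T])$, hence weak-to-weak continuous, so $\partial^2\mu_j^{(i)}\rightharpoonup\partial^2\mu_j$ in $L^2$. Since the norm on a Banach space is weakly lower semi-continuous (equivalently $\|\cdot\|_{L^2}^2$ is convex and strongly continuous), $\|\partial^2\mu_j\|_{L^2}^2\le\liminf_i\|\partial^2\mu_j^{(i)}\|_{L^2}^2$ for each $j$; summing over the finitely many $j$ and using superadditivity of $\liminf$ gives $r(\mu)\le\liminf_i r(\mu^{(i)})$, which is Assumption~\ref{ass:dandr}.4.

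For coercivity I must show $r(\mu)\to\infty$ whenever $\|\mu\|_k=\max_j\|\mu_j\|_{H^2}\to\infty$; it suffices to bound $\|\mu_j\|_{H^2}^2$ by $\|\partial^2\mu_j\|_{L^2}^2$ up to an additive constant depending only on $N$ and $T$. The pointwise constraint $|\mu_j(t)|\le N$ gives $\|\mu_j\|_{L^2}^2\le N^2T$ directly. The remaining term $\|\partial\mu_j\|_{L^2}^2$ I would control with an interpolation/Poincar\'e-type inequality on the bounded interval: applying the mean value theorem componentwise produces a point $t^\star$ with $|\partial\mu_j(t^\star)|\lesssim N/T$, whence $\partial\mu_j(t)=\partial\mu_j(t^\star)+\int_{t^\star}^t\partial^2\mu_j(s)\,\text{d}s$ and Cauchy--Schwarz yield $\|\partial\mu_j\|_{L^\infty}\lesssim N/T+\sqrt{T}\,\|\partial^2\mu_j\|_{L^2}$, so $\|\partial\mu_j\|_{L^2}^2\le C_1+C_2\|\partial^2\mu_j\|_{L^2}^2$ with $C_1,C_2$ depending only on $N,T,\kappa$. (An alternative is Ehrling's lemma, using the compact embedding $H^2([0,T])\hookrightarrow H^1([0,T])$ and $H^1\hookrightarrow L^2$ to obtain $\|\partial\mu_j\|_{L^2}\le\varepsilon\|\mu_j\|_{H^2}+C_\varepsilon\|\mu_j\|_{L^2}$ and then absorbing the $\varepsilon$-term for $\varepsilon$ small.) Combining the three estimates, $\|\mu_j\|_{H^2}^2\le C_3+C_4\,r(\mu)$ for every $j$, hence $\|\mu\|_k^2\le C_3+C_4\,r(\mu)$, so $r(\mu)\ge(\|\mu\|_k^2-C_3)/C_4\to\infty$ as $\|\mu\|_k\to\infty$, giving Assumption~\ref{ass:dandr}.5.

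The only non-routine step is the coercivity estimate, and specifically the control of the first derivative: the set $Y$ bounds $\mu_j$ pointwise, and $r$ bounds its second derivative, but neither says anything directly about $\partial\mu_j$, so one genuinely needs an interpolation inequality on the interval. Everything else reduces to weak-to-weak continuity of bounded linear maps and weak lower semi-continuity of norms.
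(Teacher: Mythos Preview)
Your proposal is correct and follows essentially the same approach as the paper. The weak lower semi-continuity argument is identical (weak-to-weak continuity of $\partial^2:H^2\to L^2$ plus weak lower semi-continuity of the $L^2$ norm), and for coercivity the paper likewise bounds $\|\mu_j\|_{L^2}$ by $N\sqrt{T}$ and controls $\|\partial\mu_j\|_{L^2}$ via a Poincar\'e inequality together with the bound $\bigl|T^{-1}\int_0^T\partial\mu_j\,\text{d}t\bigr|=|\mu_j(T)-\mu_j(0)|/T\le 2N/T$, which is the same idea as your mean-value-theorem step written slightly differently.
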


\begin{proof}
We start by showing $r$ is weakly lower semi-continuous.
For any weakly converging sequence $\mu^n_1\rightharpoonup \mu_1$ in $H^2$ we have that $\partial^2 \mu^n_1\rightharpoonup \partial^2 \mu_1$ weakly in $L^2$.
Hence it follows that $r$ is weakly lower semi-continuous.

To show $r$ is coercive let $\hat{r}(\mu_1)=\|\partial^2\mu_1\|_{L^2}^2$ for $\mu_1\in Y$.
We will show $\hat{r}$ is coercive.
Let $\mu_1\in Y$ and note that since $\mu_1\in C^1$ the first derivative exists (strongly).
Clearly we have $\|\mu_1\|_{L^2}\leq N\sqrt{T}$ and using a Poincar\'e inequality
\[ \left\|\frac{\text{d} \mu_1}{\text{d} t} - \frac{1}{T} \int_0^T \frac{\text{d} \mu_1}{\text{d} t} \; \text{d} t \right\|_{L^2} \leq C \|\partial^2 \mu_1\|_{L^2} \]
for some $C$ independent of $\mu_1$.
Therefore
\[ \left\|\frac{\text{d} \mu_1}{\text{d} t} \right\|_{L^2} \leq C \|\partial^2 \mu_1\|_{L^2} + \left|\frac{1}{T} \int_0^T \frac{\text{d} \mu_1}{\text{d} t} \; \text{d} t \right| \leq C\|\partial^2 \mu_1\|_{L^2} + \frac{2N}{T}. \]
It follows that if $\|\mu_1\|_{H^2}\to \infty$ then $\|\partial^2 \mu_1\|_{L^2}\to \infty$, hence $\hat{r}$ is coercive.
\end{proof}

Finally, the existence of a strongly convergent subsequence in Theorem~\ref{thm:strconv} follows from the fact that $H^2$ is compactly embedded into $H^1$.
Hence the convergence is strong in $H^1$.
By Morrey's inequality $H^1$ is embedded into a H\"older space ($C^{0,\frac{1}{2}}$) which is a subset of uniformly continuous functions.
This implies the convergence is uniform in $C^0$.

\section{Examples \label{sec:examples}}

In this section we give two exemplar applications of the methodology. In
principle any cost function, $d$, and regularization, $r$, (that satisfy the conditions) could be used.
For illustrative purposes we choose $d$ and $r$ to make the minimization simple to implement.
In particular, in Example 1 our choices allow us to use smoothing splines.

\subsection{Example 1: A Smoothing-Data Association Problem \label{sec:example1}}

We use the $k$-means method to solve a smoothing-data association problem.
For each $j = 1,2,\dots,k$ we take functions $x^j:[0,T]\times\mathbb{R}$ for $j=1,2,\dots ,k$ as the ``true'' cluster centers, and for sample times $t_i^j$ for $i=1,2,\dots n_j$, uniformly distributed over $[0,T]$, we let
\begin{equation} \label{eq:examples:example1:model}
z_i^j = x^j(t_i^j) + \epsilon_i^j
\end{equation}
where $\epsilon_i^j$ are iid noise terms.

The observations take the form $\xi_i = (t_i,z_i)$ for $i=1,2,\dots,n=\sum_{j=1}^k n_j$ where we have relabeled the observations to remove the (unobserved) target reference.
We model the observations with density (with respect to the Lebesgue measure)
\[ p((t,z)) = \frac{1}{T} \mathbb{I}_{[0,T]}(t) \sum_{j=1}^k w_j p_\epsilon(z-x^j(t)) \]
on $\mathbb{R}\times\mathbb{R}$ where $p_\epsilon$ denotes the common density of the $\epsilon_i^j$ and $w_j$ denotes the probability that an observation is generated by trajectory $j$.
We let each cluster center be equally weighted: $w_j=\frac{1}{k}$.
The cluster centers were fixed and in particular did not vary between numerical experiments.

When the noise is bounded this is precisely the problem described in Section~\ref{sec:MTT:theory} with $\kappa=1$, hence the problem converges.
We use a truncated Gaussian noise term.

In the theoretical analysis of the algorithm we have considered only the minimization problem associated with the $k$-means algorithm; of course minimizing complex functionals of the form of $f_n$ is itself a challenging problem.
Practically, we adopt the usual $k$-means strategy \cite{lloyd82} of iteratively assigning data to the closest of a collection of $k$ centers and then re-estimating each center by finding the center which minimizes the average regularized cost of the observations currently associated with that center.
As the energy function is bounded below and monotonically decreasing over
iterations, this algorithm converges to a local (but not necessarily global) minimum.

More precisely, in the particular example considered here we employ the
following iterative procedure:
\begin{enumerate}[1.]
\item Initialize $\varphi^0: \{1,2,\dots,n\} \to \{1,2,\dots,k\}$ arbitrarily.
\item For a given data partition $\varphi^r: \{1,2,\dots, n\}\to \{1,2,\dots, k\}$ we independently find the cluster centers $\mu^r=(\mu_1^r,\mu_2^r,\dots,\mu_k^r)$ where each $\mu_j^r\in H^2([0,T])$ by
\[ \mu^r_j = \argmin_{\mu_j} \frac{1}{n} \sum_{i:\varphi^r(i)=j} |z_i-\mu_j(t_i)|^2 + \lambda \|\partial^2 \mu_j\|_{L^2}^2 \quad \text{for } j=1,2,\dots, k. \]
This is done using smoothing splines.
\item Data is repartitioned using the cluster centers $\mu^r$
\[ \varphi^{r+1}(i) = \argmin_{j=1,2,\dots, k} |z_i - \mu_j^r(t_i)|. \]
\item If $\varphi^{r+1} \neq \varphi^{r}$ then return to Step 2.
Else we terminate.
\end{enumerate}

Let $\mu^n=(\mu^n_1,\dots,\mu^n_k)$ be the output of the $k$-means algorithm from $n$ data points.
To evaluate the success of the methodology when dealing with a finite sample of $n$ data points we look at how many iterations are required to reach convergence (defined as an assignment which is unchanged over the course of an algorithmic iteration), the number of data points correctly associated, the metric
\[ \eta(n) = \frac{1}{k} \sqrt{\sum_{j=1}^k \| \mu^n_j-x^j \|_{L^2}^2} \]
and the energy
\[ \hat{\theta}_n = f_n(\mu^n) \]
where
\[ f_n(\mu) = \frac{1}{n} \sum_{i=1}^n \bigwedge_{j=1}^k |z_i-\mu_j(t_i)|^2 + \lambda \sum_{j=1}^k \|\partial^2 \mu_j\|^2_{L^2}. \]

\begin{figure}
\caption{Smoothed data association trajectory results for the $k$-means method. \label{fig:ex4step}}
\centering
\setlength\figureheight{2.7cm}
\setlength\figurewidth{6cm}
\begin{tikzpicture}
\tikzstyle{longdash}=[dash pattern=on 6pt off 2pt]

\begin{axis}[%
width=\figurewidth,
height=\figureheight,
scale only axis,
xmin=0,
xmax=10,
ymin=-40,
ymax=60,
name=plot1,
axis x line*=bottom,
axis y line*=left
]
\addplot [color=black,mark size=1.0pt,only marks,mark=*,mark options={solid},forget plot]
  table[row sep=crcr]{example1kmeansv2-2.tsv};

\addplot [color=gray,solid,line width=1.0pt,forget plot]
  table[row sep=crcr]{example1kmeansv2-9.tsv};

\addplot [color=black,mark size=1.0pt,only marks,mark=*,mark options={solid},forget plot]
  table[row sep=crcr]{example1kmeansv2-5.tsv};

\addplot [color=gray,longdash,line width=1.0pt,forget plot]
  table[row sep=crcr]{example1kmeansv2-6.tsv};

\addplot [color=black,mark size=1.0pt,only marks,mark=*,mark options={solid},forget plot]
  table[row sep=crcr]{example1kmeansv2-8.tsv};

\addplot [color=gray,dotted,line width=1.0pt,forget plot]
  table[row sep=crcr]{example1kmeansv2-3.tsv};

\end{axis}

\begin{axis}[%
width=\figurewidth,
height=\figureheight,
scale only axis,
xmin=0,
xmax=10,
ymin=-40,
ymax=60,
name=plot2,
at=(plot1.right of south east),
anchor=left of south west,
axis x line*=bottom,
axis y line*=left
]
\addplot [color=black,solid,line width=2.0pt,forget plot]
  table[row sep=crcr]{example1kmeansv2-1.tsv};

\addplot [color=black,mark size=1.2pt,only marks,mark=triangle*,mark options={solid},forget plot]
  table[row sep=crcr]{example1kmeansv2-2.tsv};

\addplot [color=black,longdash,line width=2.0pt,forget plot]
  table[row sep=crcr]{example1kmeansv2-4.tsv};

\addplot [color=black,mark size=1.5pt,only marks,mark=star,mark options={solid},forget plot]
  table[row sep=crcr]{example1kmeansv2-5.tsv};

\addplot [color=black,dotted,line width=2.0pt,forget plot]
  table[row sep=crcr]{example1kmeansv2-7.tsv};

\addplot [color=black,mark size=1.2pt,only marks,mark=diamond*,mark options={solid},forget plot]
  table[row sep=crcr]{example1kmeansv2-8.tsv};

\end{axis}
\end{tikzpicture}
\caption*{
The figure on the left shows the raw data with the data generating model.
That on the right shows the output of the $k$-means algorithm.
The parameters used are: $k=3$, $T=10$, $\epsilon_i^j$ from a $N(0,5)$ truncated at $\pm 100$, $\lambda=1$, $x^1(t)=-15-2t+0.2t^2$, $x^2(t)=5+t$ and $x^3(t)=40$.
}
\end{figure}

Figure~\ref{fig:ex4step} shows the raw data and output of the $k$-means algorithm for one realization of the model.
We run Monte Carlo trials for increasing numbers of data points; in particular we run $10^3$ numerical trials independently for each $n=300,600,\dots,3000$ where we generate the data from~\eqref{eq:examples:example1:model} and cluster using the above algorithm.
Each numerical experiment is independent.

\begin{figure}
\caption{Monte Carlo convergence results.}
\label{fig:resex4}
\centering
\setlength\figureheight{4cm}
\setlength\figurewidth{2.2cm}
\begin{subfigure}[FIGTOPCAP]{0.3\textwidth}
\caption{}
\begin{tikzpicture}
\begin{axis}[
width=\figurewidth,
height=\figureheight,
scale only axis,
xmin = 0, xmax = 3000,
ymin = 0, ymax = 35,
axis y line* = left, 
xlabel = {$n$},
xtick={0,1500,3000},
xlabel near ticks,
ylabel = {Iterations to Converge},
ylabel near ticks
]
\addplot [
color=black,
dotted,
line width=2.0pt,
forget plot
]
table[row sep=crcr]{
300 7\\
600 8\\
900 9\\
1200 10\\
1500 10\\
1800 10\\
2100 11\\
2400 11\\
2700 11\\
3000 10\\
};
\addplot [
color=black,
dotted,
line width=1.0pt,
forget plot
]
table[row sep=crcr]{
300 4\\
600 5\\
900 5\\
1200 5\\
1500 5\\
1800 5\\
2100 5.5\\
2400 5\\
2700 6\\
3000 6\\
};
\addplot [
color=black,
dotted,
line width=1.0pt,
forget plot
]
table[row sep=crcr]{
300 17\\
600 24.5\\
900 26\\
1200 26\\
1500 27\\
1800 27\\
2100 28\\
2400 27\\
2700 28\\
3000 27\\
};
\end{axis}
\begin{axis}[
width=\figurewidth,
height=\figureheight,
scale only axis,
xmin = 0, xmax = 3000,
ymin = 95, ymax = 100,
hide x axis,
axis y line*=right,
ylabel={\% Associated},
ylabel near ticks
]
\addplot [
color=black,
solid,
line width=2.0pt,
forget plot
]
table[row sep=crcr]{
300 99.67\\
600 99.67\\
900 99.56\\
1200 99.58\\
1500 99.60\\
1800 99.61\\
2100 99.62\\
2400 99.62\\
2700 99.59\\
3000 99.60\\
};
\addplot [
color=black,
solid,
line width=1.0pt,
forget plot
]
table[row sep=crcr]{
300 98.84\\
600 99.17\\
900 99.28\\
1200 99.33\\
1500 99.40\\
1800 99.39\\
2100 99.43\\
2400 99.42\\
2700 99.44\\
3000 99.47\\
};
\addplot [
color=black,
solid,
line width=1.0pt,
forget plot
]
table[row sep=crcr]{
300 100\\
600 99.83\\
900 99.89\\
1200 99.83\\
1500 99.80\\
1800 99.78\\
2100 99.76\\
2400 99.75\\
2700 99.74\\
3000 99.75\\
};
\end{axis}
\end{tikzpicture}
\end{subfigure}
\quad \quad
\begin{subfigure}[FIGTOPCAP]{0.3\textwidth}
\caption{}
\newcommand{\boxplot}[6]{
\filldraw[fill=gray!25,line width=0.2mm] let \n{boxxl}={#1-90}, \n{boxxr}={#1+90} in (axis cs:\n{boxxl},#3) rectangle (axis cs:\n{boxxr},#4); 
\draw[line width=0.2mm,color=black] (axis cs:#1,#4) -- (axis cs:#1,#6); 
\draw[line width=0.2mm,color=black] (axis cs:#1,#3) -- (axis cs:#1,#5); 
}

\begin{tikzpicture}
\begin{axis}[
width=\figurewidth,
height=\figureheight,
scale only axis,
xmin = 0, xmax = 3100,
ymin = 0, ymax = 3,
xlabel = {$n$},
xtick={0,1500,3000},
xlabel near ticks,
ylabel = {$\eta(n)$},
ylabel near ticks
]
\boxplot{300}{1.6859}{1.4664}{1.9760}{1.1951}{2.9}  
\boxplot{600}{1.3781}{1.2295}{1.5297}{1.0492}{2.0915}
\boxplot{900}{1.2495}{1.1360}{1.3724}{0.9903}{1.6129}
\boxplot{1200}{1.1848}{1.0871}{1.2865}{0.9569}{1.4495}
\boxplot{1500}{1.1508}{1.0634}{1.2384}{0.9567}{1.3700}
\boxplot{1800}{1.1162}{1.0474}{1.1970}{0.9494}{1.3366}
\boxplot{2100}{1.1036}{1.0344}{1.1744}{0.9399}{1.2936}
\boxplot{2400}{1.0884}{1.0263}{1.1517}{0.9405}{1.2660}
\boxplot{2700}{1.0701}{1.0133}{1.1332}{0.9174}{1.2363}
\boxplot{3000}{1.0599}{1.0030}{1.1198}{0.9179}{1.2117}
\draw[line width=0.2mm,color=black,->] (axis cs:300,2) -- (axis cs:300,2.9); 
\addplot [
color=black,
solid,
line width=2.0pt,
forget plot
]
table[row sep=crcr]{
300 1.6859\\
600 1.3781\\
900 1.2495\\
1200 1.1848\\
1500 1.1508\\
1800 1.1162\\
2100 1.1036\\
2400 1.0884\\
2700 1.0701\\
3000 1.0599\\
};
\end{axis}
\end{tikzpicture}
\end{subfigure}
\begin{subfigure}[FIGTOPCAP]{0.3\textwidth}
\caption{}
\begin{tikzpicture}
\tikzstyle{longdash}=[dash pattern=on 6pt off 2pt]
\begin{axis}[
width=\figurewidth,
height=\figureheight,
scale only axis,
xmin = 0, xmax = 3000,
ymin = 20, ymax = 30,
xlabel = {$n$},
xtick={0,1500,3000},
xlabel near ticks,
ytick={20,25,30},
ylabel = {$\hat{\theta}_n$},
ylabel near ticks
]
\addplot [
color=black,
solid,
line width=2.0pt,
forget plot
]
table[row sep=crcr]{
300 26.1888\\
600 26.0696\\
900 25.7552\\
1200 25.3901\\
1500 25.2111\\
1800 25.2725\\
2100 25.2330\\
2400 25.3677\\
2700 25.2252\\
3000 25.1907\\
};
\addplot [
color=black,
dashed,
line width=2.0pt,
forget plot
]
table[row sep=crcr]{
300 22.1191\\
600 23.1368\\
900 23.4829\\
1200 23.7124\\
1500 23.7841\\
1800 23.9699\\
2100 24.0711\\
2400 24.1275\\
2700 24.2123\\
3000 24.3128\\
};
\addplot [
color=black,
dashed,
line width=2.0pt,
forget plot
]
table[row sep=crcr]{
300 28.0417\\
600 26.9990\\
900 26.6013\\
1200 26.3974\\
1500 26.1784\\
1800 26.1109\\
2100 25.9797\\
2400 26.0249\\
2700 25.8833\\
3000 25.9300\\
};
\addplot [
color=black,
longdash,
line width=2.0pt,
forget plot
]
table[row sep=crcr]{
300 26.7441\\
600 26.6279\\
900 26.5731\\
1200 26.6381\\
1500 26.5868\\
1800 26.5938\\
2100 26.6025\\
2400 26.5830\\
2700 26.6068\\
3000 26.6359\\
};
\end{axis}
\end{tikzpicture}
\end{subfigure}
\caption*{
Convergence results for the parameters given in Figure~\ref{fig:ex4step}.
In (a) the thick dotted line corresponds to the median number of iterations taken for the method to converge and the thinner dotted lines are the 25\% and 75\% quantiles.
The thick solid line corresponds to the median percentage of data points correctly identified and the thinner solid line are the 25\% and 75\% quantiles.
(b) shows the median value of $\eta(n)$ (solid), interquartile range (box) and the interval between the 5\% and 95\% percentiles (whiskers).
(c) shows the mean minimum energy $\hat{\theta}_n$ (solid) and the 10\% and 90\% quantiles (dashed).
The energy associated with the data generating model is also shown (long dashes).
In order to increase the chance of finding a global minimum for each Monte Carlo trial ten different initializations were tried and the one that had the smallest energy on termination was recorded.
}
\end{figure}

Results, shown in Figure~\ref{fig:resex4}, illustrate that as measured by $\eta$ the performance of the $k$-means method improves with the size of the available data set, as do the proportion of data points correctly assigned.
The minimum energy stabilizes as the size of the data set increases, although the algorithm does take more iterations for the method to converge.
We also note that the energy of the data generating functions is higher than the minimum energy.

Since the iterative $k$-means algorithm described above does not necessarily
identify global minima, we tested the algorithm on
two targets whose paths intersect as shown in Figure \ref{fig:ex4switch}.
The data association hypotheses corresponding to correct and incorrect associations, after the crossing point, correspond to two local minima.
The observation window $[0,T]$ was expanded to investigate the convergence to the correct data association hypothesis.
To enable this to be described in more detail we introduce the crossing and non-crossing energies:
\begin{align*}
E_{\text{c}} & = \frac{1}{T} f_n(\mu_{\text{c}}) \\
E_{\text{nc}} & = \frac{1}{T} f_n(\mu_{\text{nc}})
\end{align*}
where $\mu_{\text{c}}$ and $\mu_{\text{nc}}$ are the $k$-means centers for the crossing (correct) and non-crossing (incorrect) solutions.
To allow the association performance to be quantified, we therefore define the relative energy
\[ \Delta E = E_{\text{c}} - E_{\text{nc}}. \]

\begin{figure}
\caption{Crossing tracks in the $k$-means method. \label{fig:ex4switch}}
\centering
\setlength\figureheight{1.8cm}
\setlength\figurewidth{5.8cm}
\input{example1correctswitch.tikz}
\input{example1incorrectswitch.tikz}
\caption*{
Typical data sets for times up to $T_{\text{max}}$ with cluster centers, fitted up till $T$, exhibiting crossing and non-crossing behavior.
The parameters used are $k=2$, $T_{\text{min}}=9.6\leq T\leq 11=T_{\text{max}}$, $\epsilon_i^j\iid N(0,5)$, $x^1(t)=-20+t^2$ and $x^2(t)=20+4t$.
There are $n=220$ data points uniformly distributed over $[0,11]$ with 110 observations for each track.
The crossing occurs at approximately $t\approx 8.6$ but we wait a further time unit before investigating the decision making procedure.
}
\end{figure}

\begin{figure}
\caption{Energy differences in the $k$-means method. \label{fig:ex4switchresults}}
\centering
\setlength\figureheight{3cm}
\setlength\figurewidth{5.8cm}
\begin{tikzpicture}
\begin{axis}[
width=\figurewidth,
height=\figureheight,
scale only axis,
xmin = 9.6, xmax = 11,
ymin = -6.5, ymax = 3.5,
axis y line* = left, 
xlabel = {$T$},
xlabel near ticks,
ylabel = {$\Delta E$},
ylabel near ticks
]
\addplot [
color=black,
solid,
line width=2.0pt,
forget plot
]
table[row sep=crcr]{
9.6 -4.2947\\
9.65 -3.4355\\
9.7 -2.9164\\
9.75 -2.3255\\
9.8 -2.0746\\
9.85 -1.8021\\
9.9 -1.4499\\
9.95 -1.3616\\
10.0 -1.2804\\
10.05 -1.2826\\
10.1 -1.2128\\
10.15 -1.0411\\
10.2 -1.0672\\
10.25 -0.8333\\
10.3 -0.8822\\
10.35 -0.9424\\
10.4 -0.7573\\
10.45 -0.6626\\
10.5 -0.6013\\
10.55 -0.5800\\
10.6 -0.4301\\
10.65 -0.4185\\
10.7 -0.2853\\
10.75 -0.1308\\
10.8 0.0957\\
10.85 0.1401\\
10.9 0.2905\\
10.95 0.4244\\
11.0 0.5248\\
};
\addplot [
color=black,
solid,
line width=1.0pt,
forget plot
]
table[row sep=crcr]{
9.6 -6.2602\\
9.65 -5.5058\\
9.7 -5.0778\\
9.75 -4.3372\\
9.8 -4.2722\\
9.85 -4.0405\\
9.9 -3.5139\\
9.95 -3.4826\\
10.0 -3.3896\\
10.05 -3.4727\\
10.1 -3.4763\\
10.15 -3.1006\\
10.2 -3.2543\\
10.25 -2.7685\\
10.3 -2.9848\\
10.35 -3.1537\\
10.4 -2.8353\\
10.45 -2.7180\\
10.5 -2.5833\\
10.55 -2.6178\\
10.6 -2.4713\\
10.65 -2.5806\\
10.7 -2.5149\\
10.75 -2.1304\\
10.8 -1.9762\\
10.85 -2.0261\\
10.9 -1.9711\\
10.95 -1.7948\\
11.0 -1.7120\\
};
\addplot [
color=black,
solid,
line width=1.0pt,
forget plot
]
table[row sep=crcr]{
9.6 -1.3292\\
9.65 -1.3653\\
9.7 -0.7550\\
9.75 -0.3138\\
9.8 0.1230\\
9.85 0.4363\\
9.9 0.6140\\
9.95 0.7594\\
10.0 0.8289\\
10.05 0.9076\\
10.1 1.0508\\
10.15 1.0184\\
10.2 1.1200\\
10.25 1.1020\\
10.3 1.2204\\
10.35 1.2690\\
10.4 1.3207\\
10.45 1.3928\\
10.5 1.3807\\
10.55 1.4578\\
10.6 1.6111\\
10.65 1.7436\\
10.7 1.9443\\
10.75 1.8687\\
10.8 2.1676\\
10.85 2.3062\\
10.9 2.5521\\
10.95 2.6435\\
11.0 2.7617\\
};
\end{axis}
\begin{axis}[
width=\figurewidth,
height=\figureheight,
scale only axis,
xmin = 9.7, xmax = 11,
ymin = 0, ymax = 100,
hide x axis,
axis y line*=right,
ylabel={\% Correctly identified},
ylabel near ticks
]
\addplot [
color=black,
dashed,
line width=2.0pt,
forget plot
]
table[row sep=crcr]{
9.6 12.7\\
9.65 16.5\\
9.7 20.6\\
9.75 23.3\\
9.8 29.4\\
9.85 36.9\\
9.9 40.4\\
9.95 45.2\\
10.0 48.4\\
10.05 53.4\\
10.1 57.2\\
10.15 59.3\\
10.2 61.4\\
10.25 64.0\\
10.3 64.0\\
10.35 66.0\\
10.4 64.5\\
10.45 66.0\\
10.5 65.7\\
10.55 67.0\\
10.6 65.0\\
10.65 65.1\\
10.7 65.5\\
10.75 65.2\\
10.8 64.0\\
10.85 63.4\\
10.9 62.2\\
10.95 61.1\\
11.0 61.4\\
};
\end{axis}
\end{tikzpicture}
\caption*{
Mean results are shown for data obtained using the parameters given in Figure~\ref{fig:ex4switch} for data up to time $T$ (between $T_{\text{min}}$ and $T_{\text{max}}$).
The thick solid line shows the mean $\Delta E$ and the thinner lines one standard deviation either side of the mean.
The dashed line shows the percentage of times we correctly identified the tracks as crossing.
}
\end{figure}

To determine how many numerical trials we should run in order to get a good number of simulations that produce crossing and non-crossing outputs we first ran the experiment until we achieved at least 100 tracks that crossed and at least 100 that did not.
I.e. let $N_t^\text{c}$ be the number of trials that output tracks that crossed and $N_t^\text{nc}$ be the number of trials that output tracks that did not cross.
We stop when $\text{min}\{N_t^\text{c},N_t^\text{nc}\}\geq 100$.
Let $N_t = 10\left(N_t^\text{c} + N_t^\text{c}\right)$.
We then re-ran the experiment with $N_t$ trials so we expect that we get 1000 tracks that do not cross and 1000 tracks that do cross at each time $t$.

The results in Figure~\ref{fig:ex4switchresults} show that initially the better solution to the $k$-means minimization problem is the one that incorrectly partitions the tracks after the intersection.
However, as time is run forward the $k$-means favors the partition that correctly associates tracks to targets.
This is reflected in both an increase in $\Delta E$ and the percentage of outputs that correctly identify the switch.
Our results show that for $T>9.7$ the energy difference between the two minima grows linearly with time.
However, when we look which minima the $k$-means algorithm finds our results suggest that after time $T\approx 10.25$ the probability of finding the correct minima stabilizes at approximately 64\%.
There is reasonably large variance in the energy difference.
The mean plus standard deviation is positive for all $T$ greater than 9.8, however it takes until $T=10.8$ for the average energy difference to be positive.

\subsection{Example 2: Passive Electromagnetic Source Tracking}

In the previous example the data is simply a linear projection of the trajectories.
In contrast, here we consider the more general case where the measurement $X$ and model $Y$ spaces are very different; being connected by a complicated mapping that results in a very non-linear cost function $d$.
While the increased complexity of the cost function does lead to a (linear in data size) increase in computational cost, the problem is equally amenable to our approach.

In this example we consider the tracking of targets that periodically emit radio pulses as they travel on a two dimensional surface.
These emissions are detected by an array of (three) sensors that characterize the detected emissions in terms of `time of arrival', `signal amplitude' and the `identity of the sensor making the detection'.

Expressed in this way, the problem has a structure which does not fall directly within the framework which the theoretical results of previous sections cover.
In particular, the observations are not independent (we have exactly one from each target in each measurement interval), they are not identically distributed and they do not admit an empirical measure which is weakly convergent in the large data limit.

This formulation could be refined so that the problem did fall precisely within the framework; but only at the expense of losing physical clarity.
This is not done but as shall be seen below, even in the current formulation, good performance is obtained.
This gives some confidence that $k$-means like strategies in general settings, at least when the qualitatively important features of the problem are close to those
considered theoretically, and gives some heuristic justification for the lack of rigor.

Three sensors receive amplitude and time of arrival from each target with periodicity $\tau$.
Data at each sensor are points in $\mathbb{R}^2$ whilst the cluster centers (trajectories) are time-parameterized curves in a different $\mathbb{R}^2$ space.

In the generating model, for clarity we again index the targets in the observed amplitude and time of arrival.
However, we again assume that this identifier is not observed and this notation is redefined (identities suppressed) when we apply the $k$-means method.

Let $x_j(t)\in \mathbb{R}^2$ be the position of target $j$ for $j=1,2,\dots k$ at time $t\in [0,T]$.
In every time frame of length $\tau$ each target emits a signal which is detected at three sensors.
The time difference from the start of the time frame to when the target emits this signal is called the time offset.
The time offset for each target is a constant which we call $o_j$ for $j=1,2,\dots, k$.
Target $j$ therefore emits a signal at times
\[ \tilde{t}_j(m) = m\tau + o_j \]
for $m\in \mathbb{N}$ such that $\tilde{t}_j(m)\leq T$.
Note that this is not the time of arrival and we do not observe $\tilde{t}_j(m)$.

Sensor $p$ at position $z_p$ detects this signal some time later and measures the time of arrival $t_j^p(m)\in [0,T]$ and amplitude $a^p_j(m)\in \mathbb{R}$ from target $j$.
The time of arrival is
\[ t^p_j(m) = m \tau + o_j + \frac{| x_j(m) - z_p |}{c} + \epsilon^p_j(m) = \tilde{t}_j(m) + \frac{| x_j(m) - z_p |}{c} + \epsilon^p_j(m) \]
where $c$ is the speed of the signal and $\epsilon^p_j(m)$ are iid noise terms with variance $\sigma^2$.
The amplitude is
\[ a^p_j(m) = \log\left( \frac{\alpha}{| x_j(m) - z_p |^2 + \beta} \right) + \delta^p_j(m) \]
where $\alpha$ and $\beta$ are constants and $\delta^p_j(m)$ are iid noise terms with variance $\nu^2$.
We assume the parameters $\alpha$, $\beta$, $c$, $\sigma$, $\tau$, $\nu$ and $z_p$ are known.

To simplify the notation $\Pi_q x:\mathbb{R}^2\to \mathbb{R}$ is the projection of $x$ onto it's $q^\text{th}$ coordinate for $q=1,2$.
I.e. the position of target $j$ at time $t$ can be written $x_j(t)=(\Pi_1 x_j(t), \Pi_2 x_j(t))$.

In practice we do not know to which target each observation corresponds.
We use the $k$-means method to partition a set $\{\xi_i=(t_i,a_i,p_i)\}_{i=1}^{n}$ into the $k$ targets.
Note the relabeling of indices; $\xi_i=(t_i,a_i,p_i)$ is the time of arrival $t_i$, amplitude $a_i$ and sensor $p_i$ of the $i^\text{th}$ detection.
The cluster centers are in a function-parameter product space $\mu_j = (\hat{x}_j(t),\hat{o}_j)\in C^0([0,T];\mathbb{R}^2)\times [0,\tau) \subset C^0([0,T];\mathbb{R}^2) \times \mathbb{R}$ that estimates the $j^\text{th}$ target's trajectory and time offset.
The $k$-means minimization problem is
\[ \mu^n = \argmin_{\mu\in (C^0\times [0,\tau))^k} \frac{1}{n} \sum_{i=1}^n \bigwedge_{j=1}^k d(\xi_i,\mu_j) \]
for a choice of cost function $d$.
If we look for cluster centers as straight trajectories then we can restrict ourselves to functions of the form $x_j(t) = x_j(0) + v_j t$ and consider the cluster centers as finite dimensional objects.
This allows us to redefine our minimization problem as
\[ \mu^n = \argmin_{\mu\in (\mathbb{R}^4\times[0,\tau))^k} \frac{1}{n} \sum_{i=1}^j \bigwedge_{j=1}^k d(\xi_i,\mu_j) \]
so that now $\mu_j = (x_j(0),v_j,o_j)\in \mathbb{R}^2\times\mathbb{R}^2\times[0,\tau)$.
We note that in this finite dimensional formulation it is not necessary to include a regularization term; a feature already anticipated in the definition of the minimization problem.

For $\mu_j=(x_j,v_j,o_j)$ we define the cost function
\[ d((t,a,p),\mu_j) = \left( \left( t,a\right) - \psi(\mu_j,p,m) \right) \left( \begin{array}{cc} \frac{1}{\sigma^2} & 0 \\ 0 & \frac{1}{\nu^2} \end{array} \right) \left( \left( \begin{array}{c} t \\ a \end{array} \right) - \psi(\mu_j,p,m)^\top  \right) \]
where $m=\max\{n\in \mathbb{N}: n\tau \leq t\}$,
\[ \psi(\mu_j,p,m) = \left(\frac{|x_j+m \tau v_j - z_p|}{c} + o_j + m\tau, \log\left( \frac{\alpha}{|x_j+m\tau v_j-z_p|^2 + \beta} \right) \right) \]
and superscript $T$ denotes the transpose.

We initialize the partitions by uniformly randomly choosing $\varphi^0:\{1,2,\dots, n\}\to \{1,2,\dots ,k\}$.
At the $r^{\text{th}}$ iteration the $k$-means minimization problem is then partitioned into $k$ independent problems
\[ \mu_j^r = \argmin_{\mu_j} \sum_{i\in (\varphi^{r-1})^{-1}(j)} d((t_i,a_i,p_i),\mu_j^0) \quad \text{for } 1\leq j\leq k. \]
A range of initializations for $\mu_j$ are used to increase the chance of the method converging to a global minimum.

For optimal centers conditioned on partition $\varphi^{r-1}$ we can define the partition $\varphi^r$ to be the optimal partition of $\{(t_i,a_i,p_i)\}_{i=1}^{n}$ conditioned on centers $(\mu_j^r)$ by solving
\begin{align*}
\varphi^r: \{1,2,\dots, n\} & \to \{1,2,\dots, k\} \\
i & \mapsto \argmin_{j=1,2,\dots,k} d((t_i,a_i,p_i),\mu_j^r).
\end{align*}
The method has converged when $\varphi^r=\varphi^{r-1}$ for some $r$.
Typical simulated data and resulting trajectories are shown in Figure~\ref{fig:res}.

\begin{figure}[htdp!]
\caption{Representative data and resulting tracks for the passive tracking example. \label{fig:res}}
\centering
\setlength\figureheight{3cm}
\setlength\figurewidth{5.8cm}
\input{example2kmeans.tikz}
\caption*{
Representative data is shown for the parameters $k=2$, $\tau=1$, $T=1000$, $c=100$, $z_1=(-10,-10)$, $z_2=(10,-10)$, $z_3=(0,10)$, $\epsilon^p_j(m)\iid N(0,0.03^2)$, $\delta^p_j(m) \iid N(0,0.05^2)$, $\alpha=10^8$, $\beta=5$, $x_1(t)=\frac{\sqrt{2}t}{400}(1,1)+(0,5)$, $x_2(t)=(6,7)-\frac{t}{125}(1,0)$, $o_1=0.3$ and $o_2=0.6$, given the sensor configuration shown at the top of the figure.
The $k$-means method was run until it converged, with the trajectory component of the resulting cluster centers plotted with the true trajectories at the top of the figure.
Target one is the dashed line with starred data points, target two is the solid line and square data points.
}
\end{figure}

To illustrate the convergence result achieved above we performed a test on a set of data simulated from the same model as in Figure~\ref{fig:res}.
We sample $n_s$ observations from $\{(t_i,a_i,p_i)\}_{i=1}^{n}$ and compare our results as $n_s \to n$.
Let $\hat{x}^{n_s}(t)=(\hat{x}_1^{n_s}(t),\dots,\hat{x}_k^{n_s}(t))$ be the position output by the $k$-means method described above using $n_s$ data points and $x(t)=(x_1(t),\dots, x_k(t))$ be the true values of each cluster center.
We use the metric
\[ \eta(n_s) = \frac{1}{k} \sqrt{\sum_{j=1}^k\|\hat{x}_j^{n_s}-x_j\|_{L^2}^2} \]
to measure how close the estimated position is to the exact position.
Note we do not use the estimated time offset given by the first model.
The number of iterations required for the method to converge is also recorded.
Results are shown in Figure~\ref{fig:resex2}.

In this example the data has enough separation that we are always able to recover the true data partition.
We also see improvement in our estimated cluster centers and convergence of the minimum energy as we increase the size of the data.
Finding global minima is difficult and although we run the $k$-means method from multiple starting points we sometimes only find local minima.
For $\frac{n_s}{n}=0.3$ we see the effect of finding local minima.
In this case only one Monte Carlo trial produces a bad result, but the error $\eta$ is so great (around 28 times greater than the average) that it can be seen in the mean result shown in Figure~\ref{fig:resex2}(c).

\begin{figure}[htdp]
\caption{Monte Carlo convergence results.}
\label{fig:resex2}
\centering
\setlength\figureheight{4cm}
\setlength\figurewidth{2.2cm}
\begin{subfigure}[FIGTOPCAP]{0.3\textwidth}
\caption{}
\begin{tikzpicture}
\tikzstyle{longdash}=[dash pattern=on 6pt off 2pt]
\begin{axis}[
width=\figurewidth,
height=\figureheight,
scale only axis,
xmin = 0, xmax = 1,
ymin = 1, ymax = 5,
axis y line* = left, 
xlabel = {$n_s / n$},
xtick={0,0.5,1},
xlabel near ticks,
ylabel = {Iterations to Converge},
ylabel near ticks
]
\addplot [
color=black,
longdash,
line width=2.0pt,
forget plot
]
table[row sep=crcr]{
0.1 2.5220\\
0.2 2.5480\\
0.3 2.5780\\
0.4 2.5260\\
0.5 2.6460\\
0.6 2.5690\\
0.7 2.5400\\
0.8 2.5610\\
0.9 2.5900\\
1 2.5670\\
};
\addplot [
color=black,
dotted,
line width=2.0pt,
forget plot
]
table[row sep=crcr]{
0.1 2\\
0.2 2\\
0.3 2\\
0.4 2\\
0.5 2\\
0.6 2\\
0.7 2\\
0.8 2\\
0.9 2\\
1 2\\
};
\addplot [
color=black,
dotted,
line width=1.0pt,
forget plot
]
table[row sep=crcr]{
0.1 2\\
0.2 2\\
0.3 2\\
0.4 2\\
0.5 2\\
0.6 2\\
0.7 2\\
0.8 2\\
0.9 2\\
1 2\\
};
\addplot [
color=black,
dotted,
line width=1.0pt,
forget plot
]
table[row sep=crcr]{
0.1 4\\
0.2 4\\
0.3 4\\
0.4 4\\
0.5 4\\
0.6 4\\
0.7 4\\
0.8 4\\
0.9 4\\
1 4\\
};
\end{axis}
\begin{axis}[
width=\figurewidth,
height=\figureheight,
scale only axis,
xmin = 0, xmax = 1,
ymin = 95, ymax = 101,
hide x axis,
axis y line*=right,
ylabel={\% Associated},
ylabel near ticks
]
\addplot [
color=black,
solid,
line width=2.0pt,
forget plot
]
table[row sep=crcr]{
0.1 100\\
0.2 100\\
0.3 100\\
0.4 100\\
0.5 100\\
0.6 100\\
0.7 100\\
0.8 100\\
0.9 100\\
1 100\\
};
\addplot [
color=black,
dotted,
line width=1.0pt,
forget plot
]
table[row sep=crcr]{
0.1 100\\
0.2 100\\
0.3 100\\
0.4 100\\
0.5 100\\
0.6 100\\
0.7 100\\
0.8 100\\
0.9 100\\
1 100\\
};
\addplot [
color=black,
dotted,
line width=1.0pt,
forget plot
]
table[row sep=crcr]{
0.1 100\\
0.2 100\\
0.3 100\\
0.4 100\\
0.5 100\\
0.6 100\\
0.7 100\\
0.8 100\\
0.9 100\\
1 100\\
};
\end{axis}
\end{tikzpicture}
\end{subfigure}
\quad \quad
\begin{subfigure}[FIGTOPCAP]{0.3\textwidth}
\caption{}
\newcommand{\boxplot}[6]{
\filldraw[fill=gray!25,line width=0.2mm] let \n{boxxl}={#1-0.03}, \n{boxxr}={#1+0.03} in (axis cs:\n{boxxl},#3) rectangle (axis cs:\n{boxxr},#4); 
\draw[line width=0.2mm,color=black] (axis cs:#1,#4) -- (axis cs:#1,#6); 
\draw[line width=0.2mm,color=black] (axis cs:#1,#3) -- (axis cs:#1,#5); 
}

\begin{tikzpicture}
\begin{axis}[
width=\figurewidth,
height=\figureheight,
scale only axis,
xmin = 0, xmax = 1.1,
ymin = 0, ymax = 2.5,
xlabel = {${n_s}/{n}$},
xtick={0,0.5,1},
xlabel near ticks,
ylabel = {$\eta(n)$},
ylabel near ticks
]
\boxplot{0.1}{1.3171}{1.0133}{1.6915}{0.6909}{2.2822}
\boxplot{0.2}{0.9286}{0.7129}{1.1840}{0.4814}{1.6597}
\boxplot{0.3}{0.7661}{0.5896}{0.9871}{0.3956}{1.4156}
\boxplot{0.4}{0.6494}{0.5046}{0.8362}{0.3446}{1.1904}
\boxplot{0.5}{0.5779}{0.4528}{0.7304}{0.3000}{0.9833}
\boxplot{0.6}{0.5530}{0.4287}{0.7144}{0.2988}{0.9755}
\boxplot{0.7}{0.5088}{0.3992}{0.6422}{0.2649}{0.8795}
\boxplot{0.8}{0.4805}{0.3724}{0.6157}{0.2573}{0.8421}
\boxplot{0.9}{0.4647}{0.3570}{0.5992}{0.2433}{0.8128}
\boxplot{1}{0.4263}{0.3333}{0.5359}{0.2296}{0.7481}
\addplot [
color=black,
solid,
line width=2.0pt,
forget plot
]
table[row sep=crcr]{
0.1 1.3171\\
0.2 0.9286\\
0.3 0.7661\\
0.4 0.6494\\
0.5 0.5779\\
0.6 0.5530\\
0.7 0.5088\\
0.8 0.4805\\
0.9 0.4647\\
1 0.4263\\
};
\end{axis}
\end{tikzpicture}
\end{subfigure}
\begin{subfigure}[FIGTOPCAP]{0.3\textwidth}
\caption{}
\begin{tikzpicture}
\tikzstyle{longdash}=[dash pattern=on 6pt off 2pt]
\begin{axis}[
width=\figurewidth,
height=\figureheight,
scale only axis,
xmin = 0, xmax = 1,
ymin = 1.8, ymax = 2.2,
xlabel = {$n_s / n$},
xtick={0,0.5,1},
xlabel near ticks,
ylabel = {$\hat{\theta}_n$},
ylabel near ticks
]
\addplot [
color=black,
solid,
line width=2.0pt,
forget plot
]
table[row sep=crcr]{
0.1 1.9916\\
0.2 1.9965\\
0.3 2.0176\\
0.4 1.9967\\
0.5 2.0001\\
0.6 1.9989\\
0.7 1.9977\\
0.8 2.0004\\
0.9 1.9986\\
1 1.9993\\
};
\addplot [
color=black,
dashed,
line width=2.0pt,
forget plot
]
table[row sep=crcr]{
0.1 1.8788\\
0.2 1.9189\\
0.3 1.9367\\
0.4 1.9434\\
0.5 1.9529\\
0.6 1.9516\\
0.7 1.9588\\
0.8 1.9602\\
0.9 1.9638\\
1 1.9652\\
};
\addplot [
color=black,
dashed,
line width=2.0pt,
forget plot
]
table[row sep=crcr]{
0.1 2.0987\\
0.2 2.0657\\
0.3 2.0595\\
0.4 2.0458\\
0.5 2.0425\\
0.6 2.0392\\
0.7 2.0358\\
0.8 2.0379\\
0.9 2.0344\\
1 2.0336\\
};
\addplot [
color=black,
longdash,
line width=2.0pt,
forget plot
]
table[row sep=crcr]{
0.1 1.9997\\
0.2 1.9983\\
0.3 2.0022\\
0.4 1.9986\\
0.5 2.0008\\
0.6 2.0000\\
0.7 1.9990\\
0.8 2.0010\\
0.9 2.0006\\
1 2.0008\\
};
\end{axis}
\end{tikzpicture}
\end{subfigure}
\caption*{
Convergence results for $10^3$ Monte Carlo trials with the parameters given in Figure~\ref{fig:res}; expressed with the notation used in Figure~\ref{fig:resex4}.
In (a) we have also recorded the mean number of iterations to converge (long dashes).
The 25\% and 75\% quantiles for the number of iterations to converge is 2 and 4 for all $n$ respectively.
The 25\% and 75\% quantiles for the percentage of data points correctly identified is 100\% in both cases for all $n$.
This is due to large separation in the data space.
To increase the chance of finding a global minimum for each Monte Carlo trial, out of five different initializations, that which had the smallest energy on terminating was recorded.
}
\end{figure}

\section*{Acknowledgments}

The authors are grateful for two anonymous reviewers' valuable comments which significantly improved the manuscript.
MT is part of MASDOC at the University of Warwick and was supported by an EPSRC Industrial CASE Award PhD Studentship with Selex-ES Ltd.

%

\nocite{cuesta88,wu83,biau08,canas12}
\bibliographystyle{plain}
\bibliography{references}

\end{document}